\newtheorem{definition}{Definition}[section]
\newtheorem{lemma}[definition]{Lemma}
\newtheorem{corollary}[definition]{Corollary}
\newtheorem{proposition}[definition]{Proposition}
\newtheorem{remark}[definition]{Remark}
\newenvironment{proof}{\vspace{3pt}\textsc{Proof:}\quad }
                       {\hfill \hbox{q.e.d.}\vspace{3pt}}
\def\A{{\mathcal A}}
\def\J{{\mathcal J}}
\def\O{{\mathcal O}}
\def\P{{\mathcal P}}
\def\LL{{\mathcal L}}
\def\AA{\mathbf{A}}
\def\JJ{\mathbf{J}}
\def\LL{\mathbf{L}}
\def\RR{\mathbf{R}}
\def\sub{\subseteq }
\renewcommand{\land}{\mathrel\&}
\def\r{\mathrel r}
\def\overlap{\, \between \,}
\newcommand{\fusim}{\cdot\mkern-4mu | \mkern-4mu\cdot}
\def\comp{\succ}
\title{A constructive Galois connection\\ between closure and interior}
\author{Francesco Ciraulo\footnote{Dipartimento di Matematica, Universit\`a di Padova,
 Via Trieste, 63 - I-35121 Padova, Italy,
\texttt{\{ciraulo,sambin\}@math.unipd.it}.} \and Giovanni
Sambin$^*$}
\date{}
\begin{document}
\maketitle

\begin{abstract}
We construct a Galois connection between closure and interior
operators on a given set. All arguments are intuitionistically valid. Our construction is an intuitionistic version of the classical correspondence between closure and interior operators via complement.
\end{abstract}


\bigskip

In classical mathematics,  the theory of closure operators and that of interior operators 
can be derived one from another. 
In fact,   $\A$ is a closure operator if and only if its companion $-\A -$ (where $-$ is complementation)  is an interior operator.
Since passing to the companion is an involution, one derives that $\J$ is an interior operator if and only if $-\J -$ is a closure operator.

>From an intuitionistic point of view, the picture is more complex. In fact, $-\A -$ is not in general an interior operator.
So the notion of companion has to be defined differently.
Our proposal is based on the notion of \emph{compatibility} between two operators on subsets of a given set. 
We show intuitionistically that every closure operator $\A$ has a greatest compatible interior operator $\JJ(\A)$.
Since classically  $\JJ(\A) = - \A -$, we choose  $\JJ(\A)$ as the companion of $\A$.
Dually, the companion of an interior operator $\J$ is the greatest closure operator $\AA(\J)$ which is compatible with $\J$. 
Classically $\AA(\J) = - \J -$. 

We prove that $\AA$ and $\JJ$ form a Galois connection between closure and interior operators on given set, that is 
$\A \sub \AA(\J)$ if and only if  $\J \sub \JJ(\A)$. Classically, this collapses to the triviality $\A\sub-\J-$ if and only if $\J\sub-\A-$.

In section 1, we start by analysing the notion of compatibility between arbitrary operators  on the same set. 
We specialise to the case of compatibility between a closure and an interior operator  in section \ref{section Galois}.
There we present the constructions of $\AA$ and $\JJ$
and  prove that they form a Galois connection.

Following \cite{bp}, a set equipped with both a closure and an interior operator which are compatible is called a \emph{basic topology}.
In section~\ref{section basic topologies}, we introduce
  two classes of basic topologies:
\emph{saturated} basic topologies, in which the reduction is completely determined by the saturation, and \emph{reduced} ones,
symmetrically. 
We show that the Galois connection can be seen as the composition of two adjunctions between these two classes and all basic topologies.

Classically, saturated and reduced basic topologies coincide. This is not the case intuitionistically as it is shown by a couple of counterexamples we give in section~\ref{counterexamples}. Indeed, we show that each of the two  inclusions between these classes is equivalent to the law of excluded middle.

The constructions we propose are impredicative. However, they can be carried on
predicatively in many important cases. This topic is developed in section~\ref{section predicative}.

\section{Operators on subsets and compatibility}

This section contains some preliminaries about operators on subsets.
The relation of \emph{compatibility} between operators, introduced in \cite{bp}, is recalled and its basic properties are studied. Compatibility
between closure and interior operators will play a fundamental role
in the following sections.

We write $ Pow(S)$ for the collection of all subsets
of a set $S$. An \emph{operator on (the subsets of)} $S$ is a map $\O: Pow(S)\rightarrow Pow(S)$. For future reference, we
fix notation for some operators on a set $S$:
\begin{eqnarray}\label{eq. def. trivial operators}
id & \stackrel{def}{=} & \textrm{the identity map on }  Pow(S),\nonumber \\
- & \stackrel{def}{=} & \textrm{the intuitionistic pseudo-complement},\nonumber \\
const_U & \stackrel{def}{=} & \textrm{the operator with
constant value } U\sub S, \\
\bot & \stackrel{def}{=} & const_\emptyset, \nonumber\\
\top & \stackrel{def}{=} & const_S. \nonumber
\end{eqnarray}
For two operators $\O_1$, $\O_2$ on the same set
$S$, we write $\O_1\sub\O_2$ if $\O_1(U)\sub\O_2(U)$ for all $U\sub
S$. This is clearly a partial order on the collection of all
operators on $S$. This poset is a complete lattice and, actually, a
frame (for this and other order-theoretic notions see \cite{stone
spaces}, \cite{joyal-tierney}). For every family $\{\O_i$ $|$ $i\in
I\}$ of operators on $S$, its join $\bigvee_{i\in I}\O_i$
and its meet $\bigwedge_{i\in I}\O_i$ are constructed ``pointwise'':
\begin{equation}\label{eq def inFi and supFi}
(\bigvee_{i\in I}\O_i)W\stackrel{def}{=}\bigcup_{i\in I}(\O_i
W)\quad\textrm{and}\quad(\bigwedge_{i\in
I}\O_i)W\stackrel{def}{=}\bigcap_{i\in I}(\O_i W)
\end{equation} for all $W\sub S$.
An operator $\O$ is \emph{monotone} (or \emph{order-preserving}) if
$\O(U)\sub\O(V)$ whenever $U\sub V$; it is \emph{idempotent} if
$\O\O=\O$ (we use juxtaposition for composition). All operators of
equation (\ref{eq. def. trivial operators}), except the
pseudo-complement $-$, are monotone and idempotent. The two
operators of equation (\ref{eq def inFi and supFi}) are monotone if
so is each $\O_i$; thus monotone operators on a set form a subframe
of the frame of all operators on that set.\footnote{On the contrary, each
$\O_i$ being idempotent (or even monotone and idempotent) 
forces neither $\bigwedge_{i\in I}\O_i$ nor $\bigvee_{i\in I}\O_i$ to be
idempotent too. Here are two counterexamples. Assume $S$ is
equipped with a (non discrete) topology and let $int$ and $cl$ be the topological interior and, respectively, closure operators on it. If $U\sub S$ is not
open, then $int\wedge const_U$ is not idempotent (apply it to $S$).
Similarly, $cl\vee const_V$ is not idempotent (apply it to
$\emptyset$), provided that $V$ is not closed.}

We write
$Fix(\O)$ for the collection of all fixed points of the operator $\O$. Note that, provided that $\O$ is idempotent,
the elements of $Fix(\O)$ are all and only the subsets of the form
$\O(W)$ for some $W\sub S$.

\subsection{Compatibility}

In doing mathematics intuitionistically, we need to
distinguish inhabited subsets from merely non-empty ones. To this
aim, as in \cite{some points,bp}, we adopt the symbol $\overlap$ of overlap to express \emph{inhabited} intersection between
two subsets:
\begin{equation}\label{eq. def. overlap}U\overlap
V\quad\stackrel{def}{\Longleftrightarrow}\quad(\exists a\in S)(a\in
U\cap V)\end{equation}for $U,V\sub S$. So $U\overlap V$ is
intuitionistically stronger than, though classical equivalent to,
$U\cap V\neq\emptyset$. Overlap allows us to express in a simple way the
following relation between two operators.

\begin{definition}\label{def. compatibility}
Let $\O$ and $\O'$ be two operators on the same set $S$. We say that
$\O$ is \emph{(left-)compatible} with $\O'$ (and that $\O'$ is
\emph{(right-)compatible} with $\O$) and we write $\O\comp\O'$ if
\begin{equation}
\O U\overlap\O' V\quad\Longrightarrow\quad U\overlap\O' V
\end{equation} for all $U,V\sub S$.
\end{definition} If $cl$ and $int$ are the closure and interior
operators on a topological space $S$, then $cl\comp int$
holds.\footnote{Actually, also $int\comp cl$ holds; however this is
of little interest since $int$ is left-compatible with all
operators, as it happens to every operator contained in the
identity.} In fact, if a point $a$ is in the closure of a set $U$,
then every open neighborhood of $a$ must ``overlap'' $U$. This
argument is valid also intuitionistically as far as one defines $cl
U$ as the set of adherent points of $U$. The motivation for studying
the relation $\comp$ lies in the fact that it captures much of what
is intuitionistically valid about the link between $cl$ and $int$
(see also section~\ref{Compat  saturations  reductions}).
In this section we prove some properties of $\comp$ in the case of
arbitrary operators. The study of compatibility between closure and
interior operators will be recovered in the following section.

Since $\neg(U\overlap V)$ is equivalent to $U\cap V=\emptyset$, the
definition of $\O\comp\O'$ entails $U\cap\O' V=\emptyset\Rightarrow\O U\cap\O' V=\emptyset$ for all
$U$ and $V$. The converse holds classically, but not intuitionistically. To see this, consider the operators $--$ and $\top$. Then $U\cap\top V=\emptyset$ $\Rightarrow$ $--U\cap\top V=\emptyset$ holds, while $--U\overlap\top V$ $\Rightarrow$ $U\overlap\top V$ is tantamount to the logical formula $\neg\neg\exists
x\varphi\rightarrow\exists x \varphi$.


Assuming  $\O$ to be monotone, one can prove that  $U\cap\O' V=\emptyset\Rightarrow\O U\cap\O' V=\emptyset$ for all
$U,V$ is equivalent to
$\O-\O'\sub-\O'$ and also to
$\O'\sub(-\O-)\O'$.

By an easy verification, the following hold for every set $S$:
\begin{equation}\label{eq es. comp. fra operatori}
\begin{array}{r@{\;\comp\;}l@{\qquad}l} id & \O & \textrm{for every operator }\O;\\
\O & id & \textrm{if and only if }\O\sub id;\\
const_U & \O & \textrm{if and only if }\O\sub -const_U(=const_{-U});\\
\O & \bot & \textrm{for every operator }\O;\\
- & \O & \textrm{if and only if }\O=\bot.
\end{array}
\end{equation}

Classically, one also has
$\O\comp-\;$  if and
only if $\;\O\sub id$.

\begin{lemma}\label{lemma comp. and unions}
Let $\O$ and $\{\O_i$ $|$ $i\in I\}$ (for $I$ a set) be operators on
a set $S$. The following hold:
\begin{enumerate}
\item if $\O\comp\O_i$ for
every $i\in I$, then $\O\comp(\bigvee_{i\in I}\O_i)$;
\item if $\O_i\comp\O$ for
every $i\in I$, then $(\bigvee_{i\in I}\O_i)\comp\O$.
\end{enumerate}
\end{lemma}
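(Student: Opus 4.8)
The plan is to unfold Definition~\ref{def. compatibility} and push the existential quantifier hidden in $\overlap$ through the pointwise-defined joins of equation~(\ref{eq def inFi and supFi}). The two parts are essentially symmetric in spirit, but the roles of the two arguments differ, so I would treat them separately.

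\medskip

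\emph{Part 1.} Assume $\O\comp\O_i$ for every $i\in I$; I want $\O\comp(\bigvee_{i\in I}\O_i)$, i.e. $\O U\overlap(\bigvee_{i\in I}\O_i)V\Rightarrow U\overlap(\bigvee_{i\in I}\O_i)V$ for all $U,V\sub S$. By definition of the join, $(\bigvee_{i\in I}\O_i)V=\bigcup_{i\in I}(\O_i V)$. Suppose $\O U\overlap\bigcup_{i\in I}(\O_i V)$: there is a point $a$ with $a\in\O U$ and $a\in\bigcup_{i\in I}(\O_i V)$, hence $a\in\O U$ and $a\in\O_j V$ for some $j\in I$. Thus $\O U\overlap\O_j V$, and by $\O\comp\O_j$ we get $U\overlap\O_j V$, i.e. there is $b\in U\cap\O_j V$. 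Since $\O_j V\sub\bigcup_{i\in I}(\O_i V)=(\bigvee_{i\in I}\O_i)V$, this $b$ witnesses $U\overlap(\bigvee_{i\in I}\O_i)V$, as required. Note that this argument is intuitionistically sound precisely because $\overlap$ carries an explicit witness: extracting $j$ and then $b$ is just repeated use of the elimination rule for $\exists$, with no appeal to choice since we only need a single index $j$ at a time.

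\medskip

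\emph{Part 2.} Assume $\O_i\comp\O$ for every $i\in I$; I want $(\bigvee_{i\in I}\O_i)\comp\O$, i.e. $(\bigvee_{i\in I}\O_i)U\overlap\O V\Rightarrow U\overlap\O V$. Again by the pointwise formula, $(\bigvee_{i\in I}\O_i)U=\bigcup_{i\in I}(\O_i U)$. Suppose $\bigcup_{i\in I}(\O_i U)\overlap\O V$: there is a point $a$ with $a\in\bigcup_{i\in I}(\O_i U)$ and $a\in\O V$, so $a\in\O_j U$ for some $j\in I$ and $a\in\O V$, whence $\O_j U\overlap\O V$. By $\O_j\comp\O$ this yields $U\overlap\O V$, which is exactly the desired conclusion — here no further inclusion step is needed, since the right-hand operand $\O V$ is unchanged.

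\medskip

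I do not expect a genuine obstacle: the statement is the expected ``compatibility is preserved by arbitrary joins in each variable'' lemma, and the only point requiring care is the order of quantifier manipulation, namely that one must extract the overlap witness $a$ \emph{before} decomposing membership in the union, so that the index $j$ is produced constructively from that single witness rather than uniformly over $I$. Consequently the proof needs no form of choice and is intuitionistically valid, which is the feature the surrounding discussion is at pains to preserve.
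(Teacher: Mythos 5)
Your proof is correct and follows essentially the same route as the paper's: in each part you extract the overlap witness, decompose membership in the pointwise union to pin down an index $j$, apply $\O\comp\O_j$ (resp.\ $\O_j\comp\O$), and in part 1 conclude a fortiori from $\O_j V\sub(\bigvee_{i\in I}\O_i)V$. The added remarks on quantifier order and the absence of choice are accurate but not needed beyond what the paper already does.
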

\begin{proof}
1. Assume $\O U\overlap(\bigvee_{i\in I}\O_i) V$ $=$ $\bigcup_{i\in
I}(\O_i V)$. Then there exists $i\in I$ such that $\O U\overlap\O_i
V$. Since $\O\comp\O_i$, one has $U\overlap\O_i V$. A fortiori
$U\overlap\bigcup_{i\in I}\O_i V$. 2. If $(\bigvee_{i\in I}\O_i)
U\overlap\O V$, then there exists $i\in I$ such that $\O_i
U\overlap\O V$. Since $\O_i\comp\O$, one has $U\overlap\O V$, as
wished.
\end{proof}

The analogous statement for intersections does not hold. As for the
analogous of $1$, consider the following counterexample in a
classical setting (use the classically-valid characterization of compatibility): given the reals with their natural topology, one
has both $(int)(cl)\comp const_{(-\infty,0]}$ and $(int)(cl)\comp
const_{[0,+\infty)}$, but not $(int)(cl)\comp
const_{(-\infty,0]}\cap const_{[0,+\infty)}=const_{\{0\}}$. The
analogous of $2$ holds for $I$ inhabited (this follows from item
$1$ in the following lemma), but not for the empty intersection
$\top$ (in fact, $\top\comp\O$ only if $\O=\bot$).

\begin{lemma}\label{lemma Trentinaglia esteso}
Let $\O$, $\O'$ and $\O''$ be operators on a set $S$; then the
following hold:
\begin{enumerate}
\item if $\O''\sub\O$ and $\O\comp\O'$,
then $\O''\comp\O'$;
\item if $\O\comp\O'$ and $\O''\comp\O'$, then $\O\O''\comp\O'$;
\item if $\O\comp\O'$, then $\O\comp\O'\O''$.
\end{enumerate}
\end{lemma}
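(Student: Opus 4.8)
The plan is to unfold Definition \ref{def. compatibility} in each of the three cases and chase the overlap relation, using only the defining implication of $\comp$ together with monotonicity of $\overlap$ in each argument (i.e.\ $U\overlap W$ and $U\sub U'$ give $U'\overlap W$, and similarly on the right).

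For item~1, I would assume $\O''U\overlap\O'V$. Since $\O''\sub\O$, we have $\O''U\sub\O U$, hence $\O U\overlap\O'V$; now $\O\comp\O'$ yields $U\overlap\O'V$, which is exactly what is needed for $\O''\comp\O'$. For item~3, I would assume $\O U\overlap\O'\O''V$; reading $\O'\O''V$ as $\O'W$ with $W=\O''V$, the hypothesis $\O\comp\O'$ applied to the pair $(U,W)$ gives $U\overlap\O'W=\O'\O''V$, as required. For item~2, I would assume $\O\O''U\overlap\O'V$; writing this as $\O(\O''U)\overlap\O'V$ and applying $\O\comp\O'$ to the pair $(\O''U,V)$ gives $\O''U\overlap\O'V$, and then applying $\O''\comp\O'$ to the pair $(U,V)$ gives $U\overlap\O'V$, which is what $\O\O''\comp\O'$ demands.

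Each step is a one-line substitution into the definition, so there is essentially no obstacle; the only point requiring a little care is that the definition of $\comp$ must be applied with the \emph{correct} instantiation of its two bound subsets — in items~2 and~3 the relevant subset in the position of $U$ (resp.\ $V$) is itself an image under $\O''$, so one should be explicit that no hypothesis on $\O''$ (beyond $\O''\comp\O'$ in item~2) is used, and in particular monotonicity of the operators is never invoked. Note also that items~1 and~3 make no assumption whatsoever on $\O''$.
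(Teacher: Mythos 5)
Your proposal is correct and coincides with the paper's own proof: in each item you unfold the definition of $\comp$ and apply it with the same instantiations (in particular, treating $\O''U$ as the left argument in item~2 and $\O''V$ as the operand of $\O'$ in item~3), and your observation that no monotonicity is ever needed is accurate.
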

\begin{proof}
$1.$ If $\O'' U\overlap\O' V$, then $\O U\overlap\O' V$ (because
$\O''\sub\O$), hence $U\overlap\O' V$ (because $\O\comp\O'$). $2.$
If $\O\O'' U\overlap\O'V$, then $\O'' U\overlap\O'V$ (because
$\O\comp\O'$) and hence $U\overlap\O'V$ (because $\O''\comp\O'$).
$3.$ If $\O U\overlap\O'\O'' V$, then $U\overlap\O'\O'' V$ (because
$\O\comp\O'$).
\end{proof}

\subsubsection{On the greatest  compatible operators}

For every $\O$,  the operator $\bot$ is both the least operator which is
left-compatible with $\O$ and the least operator which is right-compatible
with $\O$. Now a natural question is whether the greatest
left-compatible and the greatest right-compatible operators exist as well.
We can easily show (by means of an intuitionistic, though impredicative proof) that the answer is affirmative.

\begin{proposition}\label{prop. existence of LL(O) and RR(O)} 
For every operator  $\O$ on a set $S$,  both the greatest
operator left-compatible and the greatest operator right-compatible with $\O$ exist and
 are denoted
by $\LL(\O)$ and $\RR(\O)$, respectively.
\end{proposition}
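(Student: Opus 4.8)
The plan is to use the fact that the operators on $S$ form a complete lattice together with Lemma~\ref{lemma comp. and unions}, which tells us that compatibility is preserved under arbitrary joins. For the left-compatible case, I would define $\LL(\O)$ to be the join of the family of \emph{all} operators that are left-compatible with $\O$, that is,
\[
\LL(\O)\stackrel{def}{=}\bigvee\{\,\O'\mid\O'\comp\O\,\}.
\]
This family is nonempty (it contains $\bot$, and in fact $id$ if $\O$ is not pathological, though we only need $\bot$), and it is a legitimate subcollection of the complete lattice of operators, so the join exists. By part~2 of Lemma~\ref{lemma comp. and unions}, the join of a family of operators each left-compatible with $\O$ is again left-compatible with $\O$; hence $\LL(\O)\comp\O$. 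Since $\LL(\O)$ contains every member of the defining family, it is by construction the greatest such operator. The argument for $\RR(\O)$ is entirely symmetric, using part~1 of Lemma~\ref{lemma comp. and unions}: set $\RR(\O)\stackrel{def}{=}\bigvee\{\,\O'\mid\O\comp\O'\,\}$, observe the family contains $\bot$, apply part~1 to conclude $\O\comp\RR(\O)$, and note it dominates every compatible operator by construction.

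The one genuinely nontrivial point — and the reason the proposition is flagged as impredicative — is the formation of the join $\bigvee\{\O'\mid\O'\comp\O\}$ itself: the index ``set'' here is really the collection of all operators $Pow(S)\to Pow(S)$ satisfying a predicate, which is a proper class or at least an object whose existence requires an impredicative comprehension/power-type principle, not the set-indexed joins of~(\ref{eq def inFi and supFi}). So the main obstacle is not the verification of maximality or compatibility, which are immediate from the lemma, but rather justifying that this large join is a well-defined operator. I would simply remark that it is, working in an impredicative setting (e.g. a topos, or $\mathbf{CZF}$ plus the powerset axiom, or plain intuitionistic higher-order logic), and defer to section~\ref{section predicative} the question of when $\LL(\O)$ and $\RR(\O)$ can be obtained predicatively. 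Everything else in the proof is bookkeeping: the pointwise join formula gives $\LL(\O)(W)=\bigcup\{\O'(W)\mid\O'\comp\O\}$, and one reads off both defining properties directly.

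Thus the proof would be just a few lines: define the two operators as the joins of their respective compatibility classes, invoke the nonemptiness of each class (witnessed by $\bot$), invoke Lemma~\ref{lemma comp. and unions}(2) and (1) respectively to see that the joins lie back in the classes, and observe maximality is automatic. I would make explicit in the statement or a following remark that the construction is impredicative precisely because of this step, setting up the discussion of the predicative situation later in the paper.
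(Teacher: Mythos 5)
Your proof is correct and follows essentially the same route as the paper: the paper also defines $\LL(\O)=\bigvee\{\O'\mid\O'\comp\O\}$ and $\RR(\O)=\bigvee\{\O'\mid\O\comp\O'\}$ and then invokes Lemma~\ref{lemma comp. and unions} (with the same assignment of its two parts as yours). Your additional remarks on the impredicativity of the large join are accurate and match the paper's own framing of the construction.
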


\begin{proof}
Put $\LL(\O)$ $\stackrel{def}{=}$ $\bigvee\{\O'$ $|$
$\O'\comp\O\}$ and $\RR(\O)$ $\stackrel{def}{=}$ $\bigvee\{\O'$ $|$
$\O\comp\O'\}$; then apply lemma \ref{lemma comp. and unions}.
\end{proof}

As a first stock of examples, the properties displayed in
(\ref{eq es. comp. fra operatori}) give:
\begin{equation}\label{eq. LL and RR casi banali}
\RR(id)=\top,\ \LL(id)=id,\ \RR(const_U)=const_{-U},\
\LL(\bot)=\top,\ \RR(-)=\bot.
\end{equation}

By the very definition of $\LL(\O)$ and item $1$ of
lemma \ref{lemma Trentinaglia esteso} it follows that:
\begin{equation}\label{eq. semi-Galois connection}
\O'\comp\O\qquad\textrm{if and only if}\qquad\O'\sub\LL(\O)\,
.\end{equation}
The rest of this section is devoted to finding a more explicit
characterization of $\LL(\O)$ and $\RR(\O)$. We start with the
former.

\begin{proposition}
The operator $\LL(\O)$ satisfies:
\begin{equation}\label{eq def L(O)}
a\in\LL(\O)\,U\quad\Longleftrightarrow\quad(\forall\, V\sub
S)(a\in\O V\ \Rightarrow\ U\overlap\O V)\end{equation} for all $a\in
S$ and $U\sub S$.
\end{proposition}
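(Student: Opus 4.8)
The plan is to prove the equivalence (\ref{eq def L(O)}) by defining a candidate operator $\O^*$ via the right-hand side and showing it coincides with $\LL(\O)$. Concretely, I would set
\[
a\in\O^* U\quad\stackrel{def}{\Longleftrightarrow}\quad(\forall\, V\sub S)(a\in\O V\ \Rightarrow\ U\overlap\O V)
\]
for all $a\in S$ and $U\sub S$, and then establish two things: first, that $\O^*\comp\O$; second, that $\O'\comp\O$ implies $\O'\sub\O^*$ for every operator $\O'$. Together with (\ref{eq. semi-Galois connection}) — or directly with the definition of $\LL(\O)$ as the join of all operators left-compatible with $\O$ — this forces $\O^*=\LL(\O)$, which is exactly the claim.

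For the first part, suppose $\O^* U\overlap\O V$, say $a\in\O^* U\cap\O V$ for some $a\in S$. Since $a\in\O^* U$, the defining condition applied to this particular $V$ (using the hypothesis $a\in\O V$) yields $U\overlap\O V$. Hence $\O^*\comp\O$. For the second part, let $\O'\comp\O$ and suppose $a\in\O' U$. To see $a\in\O^* U$, take any $V\sub S$ with $a\in\O V$; then $a\in\O' U\cap\O V$, so $\O' U\overlap\O V$, and compatibility gives $U\overlap\O V$. As $V$ was arbitrary, $a\in\O^* U$, so $\O'\sub\O^*$.

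Finally I would assemble these: $\O^*$ is left-compatible with $\O$, so $\O^*\sub\LL(\O)$ by maximality of $\LL(\O)$ (equivalently by (\ref{eq. semi-Galois connection})); conversely $\LL(\O)\comp\O$ holds by Proposition \ref{prop. existence of LL(O) and RR(O)}, hence the second part gives $\LL(\O)\sub\O^*$. Therefore $\LL(\O)=\O^*$ and (\ref{eq def L(O)}) holds. I do not anticipate a genuine obstacle here — every step is a direct unwinding of the definitions of $\overlap$, of $\comp$, and of $\LL(\O)$ as a join — and no use of excluded middle is needed since we only ever manipulate the positive existential statement $U\overlap\O V$; the only point deserving a moment's care is keeping the quantifier over $V$ correctly scoped when proving $\O'\sub\O^*$, so that the arbitrary $V$ in the definition of $\O^*$ is matched against the witness coming from $\O' U\overlap\O V$.
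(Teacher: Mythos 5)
Your proof is correct and follows essentially the same route as the paper: both define the candidate operator by the right-hand side of (\ref{eq def L(O)}) and identify it with $\LL(\O)$ by showing it is the greatest operator left-compatible with $\O$. The paper merely compresses your two implications into a single logical rewriting of the biconditional $\O'\comp\O\Leftrightarrow\O'\sub\LL'(\O)$.
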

\begin{proof}
Let $\LL'(\O)$ be the operator defined by the right-hand side of (\ref{eq def L(O)}),
that is,  $a\in\LL'(\O)\,U\stackrel{def}{\Longleftrightarrow}(\forall\, V\sub
S)(a\in\O V\ \Rightarrow\ U\overlap\O V)$.
Then for every operator $\O'$,  compatibility  $\O' U$ $\overlap$
$\O V$ $\Rightarrow$ $U$ $\overlap \O V$ (for all $U,V\sub S$) can
be rewritten as $a\in\O' U$ $\land$ $a\in\O V$ $\Rightarrow$
$U\overlap\O V$ (for all $a\in S$ and $U,V\sub S$), that is, $a\in\O'
U$ $\Rightarrow$ $(\forall V\sub S)(a\in\O V$ $\Rightarrow$
$U\overlap\O V)$ (for all $a\in S$ and $U\sub S$).
So $\O' \comp \O$ if and only if $\O' \sub \LL'(\O)$. This shows that
$ \LL'(\O)$ is the greatest operator left-compatible with $\O$ and hence
$ \LL(\O) =  \LL'(\O)$, that is the claim.
\end{proof}

In order to reach a more explicit description of $\RR(\O)$, we start
with:

\begin{definition}\label{def. splitting subset}
For every operator $\O$ on a set $S$, we say that a subset $Z\sub S$ \emph{splits} $\O$ if $\O U\overlap Z$ $\Rightarrow$
$U\overlap Z$ for all $U\sub S$.
\end{definition}
In other words, $Z$ splits $\O$ if and only if $\O\comp const_Z$.
Conversely, note  that
$\O\comp\O'$ if and only if $\O' V$ splits $\O$ for all $V\sub S$.

\begin{lemma}
Let $\O$ be an operator on a set $S$. The collection of all subsets
that split $\O$ is a sub-suplattice of $Pow(S)$.
\end{lemma}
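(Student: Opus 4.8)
The plan is to show that the collection of subsets splitting $\O$ is closed under arbitrary unions (including the empty union, which gives $\emptyset$, the bottom of $Pow(S)$). Recall from the remark just before the lemma that $Z$ splits $\O$ precisely when $\O \comp const_Z$. So the statement to prove is: if $\{Z_i \mid i \in I\}$ is a family of subsets each splitting $\O$, then $\bigcup_{i \in I} Z_i$ splits $\O$ as well.

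First I would translate this into the language already developed. Since $Z_i$ splits $\O$ means $\O \comp const_{Z_i}$, and since $const_{\bigcup_i Z_i} = \bigvee_{i \in I} const_{Z_i}$ (the join of operators is computed pointwise by equation (\ref{eq def inFi and supFi}), and the pointwise union of constantly-$Z_i$-valued maps is the constantly-$\bigcup_i Z_i$-valued map), item 1 of Lemma \ref{lemma comp. and unions} applies directly: from $\O \comp const_{Z_i}$ for all $i \in I$ we conclude $\O \comp \bigvee_{i \in I} const_{Z_i} = const_{\bigcup_i Z_i}$, i.e. $\bigcup_i Z_i$ splits $\O$. The empty case $I = \emptyset$ gives $\bigvee = \bot = const_\emptyset$, consistent with $\emptyset$ always splitting $\O$ (as in (\ref{eq es. comp. fra operatori}), $\O \comp \bot$ for every $\O$). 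Hence the collection is a sub-suplattice of $Pow(S)$: it contains $\emptyset$ and is closed under unions, and these unions agree with those of $Pow(S)$.

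Alternatively, and perhaps more transparently, one can give the one-line direct argument without invoking the constant-operator reformulation: suppose $\O U \overlap \bigcup_{i \in I} Z_i$; then there is $a \in \O U$ with $a \in Z_i$ for some $i \in I$, so $\O U \overlap Z_i$, whence $U \overlap Z_i$ since $Z_i$ splits $\O$, and therefore $U \overlap \bigcup_{i \in I} Z_i$ a fortiori. This is intuitionistically valid since $\overlap$ is defined via an existential quantifier and we only ever extract and re-package witnesses. I would present this direct computation as the proof, remarking that it is essentially item 1 of Lemma \ref{lemma comp. and unions} read through the correspondence $Z \text{ splits } \O \iff \O \comp const_Z$.

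I do not expect any genuine obstacle here; the only point requiring a word of care is that ``sub-suplattice'' must be understood as: the inclusion into $Pow(S)$ preserves arbitrary joins, so in particular the bottom element $\emptyset$ lies in the collection — which is exactly the $I = \emptyset$ instance and needs no separate verification. It is worth noting for later use that this collection is \emph{not} in general closed under intersections, paralleling the failure of the intersection analogue of Lemma \ref{lemma comp. and unions} discussed after that lemma.
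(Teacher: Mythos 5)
Your proof is correct and your primary argument is exactly the paper's: reformulate ``$Z_i$ splits $\O$'' as $\O\comp const_{Z_i}$, note $\bigvee_i const_{Z_i}=const_{\bigcup_i Z_i}$, and apply item 1 of Lemma \ref{lemma comp. and unions}. The direct unwinding you add and the observation about the empty union are sound but not needed beyond what the paper already does.
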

\begin{proof}
We show that the union of splitting subsets is splitting too. If $Z_i$ splits
$\O$ for all $i\in I$, then $\O\comp const_{Z_i}$ for all $i\in I$,
hence (lemma \ref{lemma comp. and unions}) $\O\comp(\bigvee_{i\in
I}const_{Z_i})$ = $const_{\bigcup_{i\in I}Z_i}$; so $\bigcup_{i\in
I}Z_i$ splits $\O$.
\end{proof}

As a corollary, one gets that $\bigcup\{Z\sub S\ |\ Z\textrm{ splits
}\O\}$ is the largest subset of $S$ that splits $\O$.

\begin{proposition}
The operator $\RR(\O)$ is the constant operator with value the
largest subset that splits $\O$.
\end{proposition}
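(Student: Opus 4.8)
The plan is to establish the two inclusions $const_Z\sub\RR(\O)$ and $\RR(\O)\sub const_Z$, where $Z$ denotes the largest subset that splits $\O$, i.e. $Z=\bigcup\{W\sub S\mid W\textrm{ splits }\O\}$, which exists by the corollary stated just above. Since both sides of the desired identity are constant operators once the inclusion $\RR(\O)\sub const_Z$ is known, proving these two inclusions suffices.

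For the inclusion $const_Z\sub\RR(\O)$, recall from the remark following Definition~\ref{def. splitting subset} that "$Z$ splits $\O$" is the same as "$\O\comp const_Z$"; thus $const_Z$ is right-compatible with $\O$, and since $\RR(\O)$ is by Proposition~\ref{prop. existence of LL(O) and RR(O)} the greatest operator right-compatible with $\O$, we get $const_Z\sub\RR(\O)$. For the reverse inclusion, I would first observe that $\RR(\O)$ is itself right-compatible with $\O$: every member $\O'$ of the family $\{\O'\mid\O\comp\O'\}$ satisfies $\O\comp\O'$, so item~1 of Lemma~\ref{lemma comp. and unions} gives $\O\comp\bigvee\{\O'\mid\O\comp\O'\}=\RR(\O)$. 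Then, using the other observation recorded after Definition~\ref{def. splitting subset}, namely that $\O\comp\O'$ holds if and only if $\O' V$ splits $\O$ for all $V\sub S$, I conclude that $\RR(\O)V$ splits $\O$ for every $V\sub S$. By maximality of $Z$ among splitting subsets, $\RR(\O)V\sub Z$ for all $V$, that is $\RR(\O)\sub const_Z$. Combining the two inclusions yields $\RR(\O)=const_Z$.

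I do not expect a genuine obstacle here: the argument is essentially a repackaging of the two equivalences "($W$ splits $\O$) $\Leftrightarrow$ ($\O\comp const_W$)" and "($\O\comp\O'$) $\Leftrightarrow$ (every $\O' V$ splits $\O$)", together with the maximality built into $\RR(\O)$. The only point deserving a moment's care — and the only place where intuitionistic validity could conceivably be at stake — is checking that $\RR(\O)$ actually belongs to the family of which it is the supremum, which is precisely where Lemma~\ref{lemma comp. and unions} is invoked; everything else is purely formal.
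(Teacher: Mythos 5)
Your proof is correct and follows essentially the same route as the paper: both arguments rest on the equivalences ``$W$ splits $\O$ iff $\O\comp const_W$'' and ``$\O\comp\O'$ iff every $\O'V$ splits $\O$,'' together with the maximality built into $\RR(\O)$ and into the largest splitting subset. The only (cosmetic) difference is that the paper argues member-by-member over the family $\{\O'\mid\O\comp\O'\}$, whereas you apply Lemma~\ref{lemma comp. and unions} once to conclude $\O\comp\RR(\O)$ and then reason directly about the values $\RR(\O)V$.
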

\begin{proof}
The constant operator with value the largest subset that splits $\O$
is $const_{\bigcup\{Z\ |\ Z\textrm{ splits }\O\}}$. This can be
rewritten as $\bigvee\{const_Z\ |\ Z\textrm{ splits }\O\}$, that is,
$\bigvee\{const_Z\ |\ \O\comp const_Z\}$. By the construction of
$\RR(\O)$ in proposition \ref{prop. existence of LL(O) and RR(O)},
only the inclusion $\bigvee\{\O'\ |\ \O\comp\O'\}$ $\sub$
$\bigvee\{const_Z\ |\ \O\comp const_Z\}$ needs to be checked. So,
let $\O'$ be such that $\O\comp\O'$; we must prove that
$\O'\sub\bigvee\{const_Z\ |\ \O\comp const_Z\}$.  The hypothesis $\O\comp\O'$ means that $\O'V$ splits $\O$ for all $V\sub
S$, that is, $\O\comp const_{\O'V}$ for all $V\sub S$. Therefore, it is sufficient to check that
$\O'\sub\bigvee\{const_{\O' V}\ |\ V\sub S\}$, which is trivial.
\end{proof}

Note that $\O\comp\O'$ implies $\O'\sub\RR(\O)$ because $\RR(\O)$ is the greatest operator which is right-compatible with $\O$. The converse fails,
in general (however, see (\ref{eq. J-semi-Galois}) in proposition \ref{caratterizzazione di J(A)}); here is a counterexample. Let $\O$ be the operator on
$S$ defined by $\O(U)$ = $\{a\in S$ $|$ $U$ is inhabited$\}$
(classically, $\O(\emptyset)=\emptyset$ and $\O(U)=S$ otherwise). It
is easy to check that $S$ splits $\O$; hence $\RR(\O)$ = $const_S$ =
$\top$. If $S$ contains at least two distinct elements $a$ and $b$
say, then $const_{\{b\}}\sub\RR(\O)$ but $\O\not\comp const_{\{b\}}$
because $\O(\{a\})\overlap\{b\}$ $\nRightarrow$
$\{a\}\overlap\{b\}$.

\section{A Galois connection between saturations and reductions}\label{section Galois}

>From now on, we restrict our attention to closure
and interior operators. For the sake of greater generality, we adopt the
following

\begin{definition}
Let $\A$ and $\J$ be two monotone and idempotent operators on $S$.
We say that:
\begin{itemize}
\item[] $\A$ is a \emph{saturation}, or \emph{(generalized) closure
operator}, if it is \emph{expansive}, that is, $id\sub\A$;
\item[] $\J$ is a \emph{reduction}, or \emph{(generalized) interior
operator}, if it is \emph{contractive}, that is, $\J\sub id$.
\footnote{The definitions of saturation and reduction make sense also when $(Pow(S),\sub)$ is replaced with an arbitrary partial order. However, to be able to express the
notion of compatibility one needs some extra structure, as in the notion of \emph{overlap algebra} introduced in \cite{bp}. Almost all definitions and results in this paper
can be restated in a natural way in that framework. See
\cite{cisa} for some of the basic facts.}
\end{itemize}
\end{definition}

Of course, the topological operators of closure and of interior  are examples
of saturations and reductions, respectively. However, saturations
and reductions are more general notions since they usually lack some
standard topological properties such as $\J(U\cap V)=\J U\cap\J V$
or $\A\emptyset=\emptyset$. Among the operators of equation
(\ref{eq. def. trivial operators}), $\top$ is a saturation, $\bot$ a
reduction and $id$ is both a saturation and a reduction; also the
double negation operator $--$ is a saturation. With no exceptions,
in this paper $\A$ (also with subscripts) will always stand for a
saturation, while $\J$ for a reduction.

A general method for constructing saturations and reductions is
well-known.
For any family $\P\sub Pow (S)$, let
\begin{eqnarray}\label{eq def A and J from a family}
\A_\P(U) & \stackrel{def}{=} & \bigcap\{V\in\P\ |\ U\sub V\} \\
\J_\P(U) & \stackrel{def}{=} & \bigcup\{V\in\P\ |\ V\sub U\}
\nonumber\end{eqnarray} for all $U\sub S$. It is straightforward
to check that $\A_\P$ and $\J_\P$ are a saturation and a reduction
on $S$, respectively. Moreover, every saturation and every reduction
can be obtained in this way, namely: $\A$ $=$ $\A_{Fix(\A)}$ for
every saturation $\A$ and $\J$ $=$ $\J_{Fix(\J)}$ for every
reduction $\J$. More precisely, it can be shown that $\A_\P$ is the
least saturation which fixes $\P$ pointwise (that is, $\P\sub
Fix(\A_\P)$ and if $\P\sub Fix(\A)$, then $\A_\P\sub\A$) and that
$\J_P$ is the greatest reduction fixing $\P$.

\begin{definition}\label{def SAT(S) and RED(S)}
For every set $S$, we write $SAT(S)$ for the collection of all saturations on
$S$ and $RED(S)$ for the collection of all reductions on $S$.
\end{definition}
It is routine to prove that:
\begin{proposition}\label{prop infAi and supJi} For every set $S$,
we have:
\begin{enumerate}
\item for every family $\{\A_i\}_{i\in I}$ of saturations on $S$,
the operator $\bigwedge_{i\in I}\A_i$ is a saturation too and hence
$SAT(S)$ is a sub-inflattice of the collection of all operators on
$S$;
\item for every family $\{\J_i\}_{i\in I}$ of reductions on $S$,
the operator $\bigvee_{i\in I}\J_i$ is a reduction too and hence
$RED(S)$ is a sub-suplattice of the collection of all operators on
$S$.
\end{enumerate}
\end{proposition}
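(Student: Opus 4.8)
The plan is to verify, for $\bigwedge_{i\in I}\A_i$, the three defining properties of a saturation --- monotonicity, expansiveness, idempotency --- and dually the three defining properties of a reduction for $\bigvee_{i\in I}\J_i$. Two of the three are immediate. Monotonicity is already available from Section~1: monotone operators form a subframe of the frame of all operators on $S$, so the pointwise meet of the monotone $\A_i$ is monotone and the pointwise join of the monotone $\J_i$ is monotone. Expansiveness and contractiveness follow at once from the pointwise formulas in (\ref{eq def inFi and supFi}): $(\bigwedge_{i\in I}\A_i)(U)=\bigcap_{i\in I}\A_i(U)\supseteq U$ since each $\A_i(U)\supseteq U$, and symmetrically $(\bigvee_{i\in I}\J_i)(U)=\bigcup_{i\in I}\J_i(U)\sub U$ since each $\J_i(U)\sub U$.

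The one point needing an argument is idempotency, and here the interplay between monotonicity and expansiveness (resp.\ contractiveness) does the work. Write $B=\bigwedge_{i\in I}\A_i$. Since $B$ is expansive, $B\sub BB$ holds automatically, so it suffices to prove $BB\sub B$. For every $U\sub S$ and every index $i$ we have $B(U)\sub\A_i(U)$, whence by monotonicity and idempotency of $\A_i$, $\A_i(B(U))\sub\A_i(\A_i(U))=\A_i(U)$; intersecting over $i$ gives $B(B(U))=\bigcap_{i\in I}\A_i(B(U))\sub\bigcap_{i\in I}\A_i(U)=B(U)$, as wanted. The argument for $C=\bigvee_{i\in I}\J_i$ is exactly dual: $C$ being contractive gives $CC\sub C$ for free, and for each $i$ one has $\J_i(U)=\J_i(\J_i(U))\sub\J_i(C(U))\sub C(C(U))$ because $\J_i(U)\sub C(U)$ and $\J_i$ is monotone, so taking the union over $i$ yields $C(U)\sub C(C(U))$. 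All of this is intuitionistically unproblematic, using only the lattice identities for $\bigcap$ and $\bigcup$ together with the hypotheses on the $\A_i$ and $\J_i$.

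For the ``sub-inflattice'' and ``sub-suplattice'' clauses one should also treat the empty family: $\bigwedge_{i\in\emptyset}\A_i=\top$ is a saturation and $\bigvee_{i\in\emptyset}\J_i=\bot$ is a reduction, both facts having been recorded among the examples following the definition of saturation and reduction. Hence $SAT(S)$ is closed under all meets taken in the frame of operators on $S$ --- the empty one included --- and $RED(S)$ under all joins, which is precisely the assertion. The only mild subtlety worth flagging is the phenomenon noted in the footnote to Section~1: a pointwise meet (or join) of arbitrary idempotent operators need not be idempotent, so it is genuinely the expansiveness (resp.\ contractiveness) hypothesis that rescues idempotency here. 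No deeper obstacle arises, which is why the proposition is routine.
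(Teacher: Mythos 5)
Your proof is correct. The paper itself gives no proof of this proposition, dismissing it with ``It is routine to prove that,'' and your verification---pointwise monotonicity and expansiveness (resp.\ contractiveness) read off from (\ref{eq def inFi and supFi}), with idempotency recovered via $\A_i(BU)\sub\A_i\A_i(U)=\A_i(U)$ and its dual, plus the empty-family cases $\top$ and $\bot$---is exactly the standard argument the authors have in mind, including the correct observation that expansiveness/contractiveness is what rescues idempotency, which fails for arbitrary idempotent operators as noted in their footnote.
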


The identity operator $id$ is both the bottom of $SAT(S)$
and the top of $RED(S)$. The top in $SAT(S)$ is the
operator $\top$; symmetrically, the bottom in $RED(S)$ is the
operator $\bot$.

The following lemma will be used several times in this paper.

\begin{lemma}\label{lemma A1subA2 J1subJ2} For all $\A_1,\A_2\in
SAT(S)$ and all $\J_1,\J_2\in RED(S)$, the following hold:
\begin{enumerate}
\item $\A_1\sub\A_2$ $\Longleftrightarrow$ $\A_2\A_1=\A_2$
$\Longleftrightarrow$ $\A_1\A_2=\A_2$ $\Longleftrightarrow$
$Fix(\A_2)\sub Fix(\A_1)$;
\item $\J_1\sub\J_2$ $\Longleftrightarrow$ $\J_1\J_2=\J_1$
$\Longleftrightarrow$ $\J_2\J_1=\J_1$ $\Longleftrightarrow$
$Fix(\J_1)\sub Fix(\J_2)$.
\end{enumerate}
\end{lemma}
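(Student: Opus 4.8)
The plan is to prove part~1 (for saturations) by showing directly that each of the other three conditions is equivalent to $\A_1\sub\A_2$, and to obtain part~2 (for reductions) as its order-dual, since a reduction on $(Pow(S),\sub)$ is precisely a saturation on the opposite order $(Pow(S),\supseteq)$ and the proof of part~1 uses nothing about $Pow(S)$ beyond the pointwise order together with $\A_1,\A_2$ being monotone, idempotent and expansive. The only auxiliary fact needed is that composition of operators is monotone in each argument in the harmless direction: if $\O_1\sub\O_2$ then $\O_1\O\sub\O_2\O$ and $\O\O_1\sub\O\O_2$ for every operator $\O$; this is immediate from the pointwise definitions.

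For part~1 I would argue as follows. From $\A_1\sub\A_2$: monotonicity of $\A_2$ applied to $id\sub\A_1$ gives $\A_2=\A_2\,id\sub\A_2\A_1$, and monotonicity of $\A_2$ applied to $\A_1\sub\A_2$ together with idempotence gives $\A_2\A_1\sub\A_2\A_2=\A_2$; hence $\A_2\A_1=\A_2$. Symmetrically, $\A_2=id\,\A_2\sub\A_1\A_2$ and $\A_1\A_2\sub\A_2\A_2=\A_2$, so $\A_1\A_2=\A_2$. And if $\A_2U=U$ then $U\sub\A_1U\sub\A_2U=U$ by expansiveness, so $Fix(\A_2)\sub Fix(\A_1)$. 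Conversely, each of the three conditions returns $\A_1\sub\A_2$: from $\A_2\A_1=\A_2$ one has $\A_1=id\,\A_1\sub\A_2\A_1=\A_2$ (using $id\sub\A_2$); from $\A_1\A_2=\A_2$ one has $\A_1=\A_1\,id\sub\A_1\A_2=\A_2$ (using $id\sub\A_2$ and monotonicity of $\A_1$); and from $Fix(\A_2)\sub Fix(\A_1)$ one has, for every $U$, that $\A_2U\in Fix(\A_2)\sub Fix(\A_1)$, so $\A_1\A_2U=\A_2U$, and monotonicity applied to $U\sub\A_2U$ gives $\A_1U\sub\A_1\A_2U=\A_2U$.

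Part~2 is the mirror image (essentially part~1 carried out in the opposite order, reductions there being saturations). Concretely: $\J_1\sub\J_2$ gives $\J_1\J_2\sub\J_1\,id=\J_1$ and $\J_1=\J_1\J_1\sub\J_1\J_2$, hence $\J_1\J_2=\J_1$; likewise $\J_2\J_1\sub id\,\J_1=\J_1$ and $\J_1=\J_1\J_1\sub\J_2\J_1$, hence $\J_2\J_1=\J_1$; and $\J_1U=U$ forces $U=\J_1U\sub\J_2U\sub U$, so $Fix(\J_1)\sub Fix(\J_2)$. The converses run as before: $\J_1\J_2=\J_1$ gives $\J_1=\J_1\J_2\sub id\,\J_2=\J_2$; $\J_2\J_1=\J_1$ gives $\J_1=\J_2\J_1\sub\J_2\,id=\J_2$; and $Fix(\J_1)\sub Fix(\J_2)$ gives, for every $U$, $\J_1U\in Fix(\J_1)\sub Fix(\J_2)$, so $\J_2\J_1U=\J_1U$, and monotonicity applied to $\J_1U\sub U$ gives $\J_1U=\J_2\J_1U\sub\J_2U$. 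All of this is visibly intuitionistic --- no case split and no pseudo-complement is used --- so there is no genuine obstacle; the only care required is bookkeeping: at each step keeping straight which of monotonicity, idempotence and expansiveness/contractivity is invoked, and using the interaction between the pointwise order and composition only in the benign direction recorded in the first paragraph.
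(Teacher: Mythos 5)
Your proof is correct and uses essentially the same elementary manipulations (monotonicity, idempotence, expansiveness/contractivity) as the paper; the only difference is organizational, since the paper proves the four conditions equivalent via a single cycle of implications while you show each of the other three equivalent to $\A_1\sub\A_2$ directly. Both arguments are intuitionistically valid and of the same length and depth, so nothing is gained or lost either way.
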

\begin{proof} 1. If $\A_1\sub\A_2$, then $\A_2\A_1\sub\A_2$
because $\A_2$ is monotone and idempotent; also $\A_2\sub\A_2\A_1$
since $\A_1$ is expansive and $\A_2$ is monotone and so
$\A_2\A_1=\A_2$. If $\A_2\A_1=\A_2$, then
$\A_1\A_2\sub\A_2\A_1\A_2=\A_2$ because $\A_2$ is expansive and
idempotent; also $\A_2\sub\A_1\A_2$ because $\A_1$ is expansive and so
$\A_1\A_2=\A_2$. If $\A_1\A_2=\A_2$, then $\A_1(\A_2 U)=\A_2 U$ for
all $U\sub S$, that is, $Fix(\A_2)\sub Fix(\A_1)$.
Assume $Fix(\A_2)\sub Fix(\A_1)$. For all $U\sub S$ one has $\A_1 U\sub\A_1\A_2
U$ since $\A_2$ is expansive and  $\A_1$ is monotone;
hence $\A_1 U \sub \A_2 U$ because $\A_2 U$  is fixed by $\A_2$ and so,
by assumption, it is fixed also by $\A_1$.
2. Similarly.
\end{proof}

%

Note that, for every $\A_1$, $\A_2$ in $SAT(S)$, the composition
$\A_1\A_2$ need not be a saturation (since it can fail to be idempotent).
Actually, by using lemma \ref{lemma A1subA2 J1subJ2} one can prove that both $\A_1\A_2$
and $\A_2\A_1$ are in $SAT(S)$ if and only if $\A_1\A_2$ $=$
$\A_2\A_1$.
A similar remark holds for reductions.

\subsection{Compatibility between saturations and reductions}
\label{Compat  saturations  reductions}

Classically, the closure and the interior operators of a topological
space are linked one another by the equations $cl$ $=$ $-int-$ and
$int$ $=$ $-cl-$, where $-$ is classical complement. Thus each of
the two operators can be defined by means of the other. These facts
are generally not true from an intuitionistic point of view. On the
other hand, the relation $\comp$ of compatibility provides a more
general link between closure and interior operators. In fact,
 as we saw after definition~\ref{def. compatibility},
$cl\comp int$ is intuitionistically valid. 
Classically, $cl\comp int$ is equivalent both to
$cl\sub -int-$ and to 
$int\sub -cl-$;\footnote{Here is a proof. First, one can rewrite compatibility 
as $U\sub - int V \Rightarrow cl U \sub - int V$. For $U = -V$, 
since $-V \sub - int V$, one gets $cl -V \sub - int V$ for all $V$.
Hence $int \sub - cl -$, which is equivalent to $cl \sub - int -$.
Conversely, let $U\sub - int V$. Then, by applying $- int -$, 
also $- int - U \sub - int V$ and hence $cl U \sub - int V$ by the assumption $cl \sub - int -$.}
 so it expresses ``half'' of
the usual requirement. Actually, since $int\sub -cl-$ is equivalent
to $Fix(int)\sub Fix(-cl-)$ (lemma \ref{lemma A1subA2 J1subJ2}), the condition $cl\comp int$ says
precisely that the topology corresponding to $int$ is coarser (has
fewer opens sets) than the topology corresponding to $cl$. For
instance, if $cl$ is the closure operator for the natural topology
on the reals $\mathbb{R}^2$ and  $int_Z$ is the
interior corresponding to the Zariski topology, then $cl\comp
int_Z$.

When $\comp$ is restricted to a relation between $SAT(S)$ and
$RED(S)$, the examples in (\ref{eq es. comp. fra operatori}) give:

\begin{equation}\label{eq. compatibility casi banali}
\begin{array}{r@{\;\comp\;}l@{\qquad}l} id & \J & \textrm{for every reduction }\J;\\
\A & id & \textrm{if and only if }\A=id;\\
\top & \J & \textrm{if and only if }\J=\bot;\\
\A & \bot & \textrm{for every saturation }\A.
\end{array}
\end{equation}
For a given $\P\sub Pow(S)$, the operators $\A_\P$ and
$\J_\P$ of equations (\ref{eq def A and J from a family}) are in
general \emph{not} compatible.
In fact, let $W$ be an inhabited
subset of a set $S$ and consider the singleton family $\P$ $=$
$\{W\}$. Then $\A_\P\emptyset$ $=$ $W$ $=$ $\J_\P S$. So
$\A_\P\emptyset\overlap\J_\P S$ holds but $\emptyset\overlap\J_\P S$
does not.

\begin{lemma}\label{lemma Trentinaglia}
Let $S$ be a set, $\A\in SAT(S)$ and $\J\in RED(\J)$. If
$\A\comp\J$, then:
\begin{enumerate}
\item $\A'\comp\J$ for all $\A'\in SAT(S)$ such that $\A'\sub\A$;
\item $\A\comp\J'$ for all $\J'\in RED(S)$ such that $\J'\sub\J$.
\end{enumerate}
\end{lemma}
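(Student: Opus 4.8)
The plan is to derive both items as instances of the general facts about compatibility already established above, namely Lemma~\ref{lemma Trentinaglia esteso} together with the characterization of inclusion between reductions in Lemma~\ref{lemma A1subA2 J1subJ2}; essentially no new idea is needed.

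For item~1 I would simply apply item~1 of Lemma~\ref{lemma Trentinaglia esteso} with $\O''=\A'$, $\O=\A$ and $\O'=\J$: from $\A'\sub\A$ and $\A\comp\J$ it yields $\A'\comp\J$ at once. Note that this uses only $\A'\sub\A$, not that $\A'$ or $\A$ are saturations; in fact the same argument works for arbitrary operators $\A'\sub\A$.

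Item~2 is slightly more delicate, because it is \emph{not} the exact mirror of item~1: in general compatibility on the right is not inherited by smaller operators (the counterexamples after Lemma~\ref{lemma comp. and unions} already show that one cannot freely shrink the right-hand operator). Here I would exploit that both $\J$ and $\J'$ are reductions. First, from $\J'\sub\J$ and item~2 of Lemma~\ref{lemma A1subA2 J1subJ2} one gets $\J\J'=\J'$. Then, applying item~3 of Lemma~\ref{lemma Trentinaglia esteso} to $\A\comp\J$ with $\O''=\J'$ yields $\A\comp\J\J'$, that is $\A\comp\J'$. Unwinding this, the content is just the observation that if $\A U\overlap\J' V$ then, since $\J' V=\J(\J' V)$, we have $\A U\overlap\J(\J' V)$ and hence $U\overlap\J(\J' V)=\J' V$ by applying the hypothesis $\A\comp\J$ to the pair $(U,\J' V)$.

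The main (and minor) obstacle is thus recognizing that item~2 must route the shrinking on the right through the idempotency identity $\J\J'=\J'$ — equivalently, through Lemma~\ref{lemma Trentinaglia esteso}.3 — rather than being a straightforward dual of item~1; once that is seen, both parts reduce to direct citations of earlier results.
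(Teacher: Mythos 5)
Your proof is correct and follows exactly the paper's own route: item~1 is a direct instance of item~1 of Lemma~\ref{lemma Trentinaglia esteso}, and item~2 combines $\J\J'=\J'$ (from Lemma~\ref{lemma A1subA2 J1subJ2}) with item~3 of Lemma~\ref{lemma Trentinaglia esteso}. Nothing to add.
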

\begin{proof}
Item $1$ is just $1$ of lemma \ref{lemma Trentinaglia esteso}.
Item $2$ follows from 3 of lemma \ref{lemma Trentinaglia esteso} and lemma~\ref{lemma A1subA2 J1subJ2}.
\end{proof}

By (\ref{eq. compatibility casi banali}),   $\bot$
is the least reduction compatible with a given saturation $\A$ and
$id$ is the least saturation compatible with a given reduction $\J$.
We now face the dual problem: to find the \emph{greatest saturation}
compatible with a given $\J$ and the \emph{greatest reduction}
compatible with a given $\A$.

\begin{definition}\label{def A(J) and J(A)}
Let $\J\in RED(S)$; when it exists, the greatest saturation
compatible with $\J$ is denoted by $\AA(\J)$. Similarly, when the
greatest reduction compatible with a given $\A\in SAT(S)$ exists, it
is denoted by $\JJ(\A)$.
\end{definition}

The facts in (\ref{eq. compatibility casi banali}) show that
$\AA(id)$, $\AA(\bot)$, $\JJ(id)$, $\JJ(\top)$ all exist and one
has:
\begin{equation}\label{eq. AA casi banali}\label{eq. JJ casi banali}
\AA(id)=id \textrm{ ,}\quad \AA(\bot)=\top \textrm{ ,}\quad
\JJ(id)=id \textrm{ ,}\quad \JJ(\top)=\bot \textrm{ .}
\end{equation}

\begin{remark}\label{A(J) and J(A) classically}
Classically, $\AA(\J)$ and $\JJ(\A)$ always exist. In fact
$\AA(\J)=-\J-$ because $-\J-$ is a saturation and, for any other
saturation $\A$, $\A$ is compatible with $\J$ exactly when
$\A\sub-\J-$.\footnote{Intuitionistically, $-\J-$ is indeed a
saturation, but it is not compatible with $\J$ in general. In fact,
for $\J$ $=$ $id$, this would give $--$ as the corresponding saturation; now if $--$
were compatible with $id$, then $a\in--U$, that is $--U\overlap
id\{a\}$, would give $U\overlap id\{a\}$, that is $a\in U$.} Dually,
$\JJ(\A)=-\A-$ because $-\A-$ is a
reduction\footnote{Intuitionistically, $-\A-$ is not even
contractive in general (think of the case $\A=id$).} and, for any
other reduction $\J$, $\J$ is compatible with $\A$ exactly when
$\J\sub-\A-$ (the latter condition is another classical equivalent
to compatibility).\end{remark}

We are going to show that $\AA(\J)$ and $\JJ(\A)$ always exist also
in an intuitionistic, though impredicative framework. Moreover, they
can be constructed also predicatively in many interesting cases (see
section 5).

\subsubsection{The construction of $\AA(\J)$}

Let $\A$ be a saturation and $\J$ be a reduction. From the
equivalence (\ref{eq. semi-Galois connection}), we know that $\A$ is
compatible with $\J$ if and only if $\A$ $\sub$ $\LL(\J)$. Hence, in
order to prove that $\AA(\J)$ exists it is sufficient to check that
$\LL(\J)$ is a saturation.

\begin{lemma}
For every $\O$, the operator $\LL(\O)$ is a saturation.
\end{lemma}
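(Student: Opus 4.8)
The plan is to show that $\LL(\O)$ satisfies $id\sub\LL(\O)$, that it is monotone, and that it is idempotent, using the explicit characterization
\[
a\in\LL(\O)\,U\quad\Longleftrightarrow\quad(\forall\, V\sub S)(a\in\O V\ \Rightarrow\ U\overlap\O V)
\]
established in equation (\ref{eq def L(O)}).

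First I would verify expansivity. Suppose $a\in U$. To see $a\in\LL(\O)\,U$, take any $V\sub S$ with $a\in\O V$; then $a\in U\cap\O V$, so $U\overlap\O V$. Hence $U\sub\LL(\O)\,U$. Monotonicity is equally direct: if $U\sub U'$ and $a\in\LL(\O)\,U$, then for any $V$ with $a\in\O V$ we get $U\overlap\O V$, and since $U\sub U'$ this gives $U'\overlap\O V$; thus $a\in\LL(\O)\,U'$.

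The remaining point is idempotency, $\LL(\O)\LL(\O)=\LL(\O)$. One inclusion, $\LL(\O)\,U\sub\LL(\O)\LL(\O)\,U$, is a consequence of expansivity applied to the subset $\LL(\O)\,U$. For the reverse, assume $a\in\LL(\O)\LL(\O)\,U$; I must show $a\in\LL(\O)\,U$, i.e. that $a\in\O V$ implies $U\overlap\O V$ for an arbitrary $V\sub S$. So fix $V$ with $a\in\O V$. By hypothesis (applied with this $V$), $\LL(\O)\,U\overlap\O V$, meaning there is some $b\in\LL(\O)\,U\cap\O V$. From $b\in\O V$ and $b\in\LL(\O)\,U$, the characterization of $\LL(\O)$ (instantiated at the same $V$) yields $U\overlap\O V$, which is exactly what was needed.

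I do not expect any real obstacle here; all three verifications are short once one has (\ref{eq def L(O)}) in hand. The only place requiring a little care is the idempotency argument, where one must remember to instantiate the universally quantified condition defining membership in $\LL(\O)$ at the correct $V$ twice — once for the outer application and once for the witness $b$ — but overlap is precisely the tool that makes the witness $b$ available constructively, so no non-intuitionistic step intervenes.
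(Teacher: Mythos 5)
Your proof is correct and follows essentially the same route as the paper: expansivity and monotonicity read off directly from the characterization (\ref{eq def L(O)}), and idempotency reduced to the inclusion $\LL(\O)\LL(\O)U\sub\LL(\O)U$. The only cosmetic difference is that in the last step the paper cites the compatibility $\LL(\O)\comp\O$ to pass from $\LL(\O)U\overlap\O V$ to $U\overlap\O V$, while you unfold that same compatibility by exhibiting the witness $b$ explicitly.
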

\begin{proof}
We use the characterization of $\LL(\O)$ provided by (\ref{eq def L(O)}).
$\LL(\O)$ is expansive: if $a\in U$, then $a\in\O V$ implies $U\overlap\O V$ for all $V$; so $a\in\LL(\O)U$. $\LL(\O)$ is monotone: if $U\sub
U'$, then $U\overlap\O V$ yields $U'\overlap\O V$; so $\LL(\O)
U\sub\LL(\O) U'$. Since $\LL(\O)$ is expansive, to prove that
$\LL(\O)$ is idempotent it is sufficient to show that
$a\in\LL(\O)\LL(\O) U$ implies $a\in\LL(\O) U$. So we assume
$a\in\LL(\O)\LL(\O) U$ and $a\in\O V$ and we claim $U\overlap\O V$.
The assumptions give  $\LL(\O)
U\overlap\O V$ and hence $U\overlap\O V$  because
$\LL(\O)\comp\O$.
\end{proof}

\begin{corollary}
For every reduction $\J$, the saturation $\AA(\J)$ exists and it is $\AA(\J)=\LL(\J)$, that is:
\begin{equation}\label{eq def A(J)}
a\in\AA(\J)\,U\quad\Longleftrightarrow\quad(\forall\, V\sub
S)(a\in\J V\ \Rightarrow\ U\overlap\J V)\end{equation}(for all $a\in
S$ and $U\sub S$).
 Moreover:
\begin{equation}\label{eq. A-semi-Galois}\A\comp\J\qquad\textrm{if and only if}\qquad\A\sub\AA(\J)\,
.\end{equation}
\end{corollary}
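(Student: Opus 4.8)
The plan is to prove the corollary in two parts, matching its two assertions. First, for the existence of $\AA(\J)$ together with the formula (\ref{eq def A(J)}): by the equivalence (\ref{eq. semi-Galois connection}), applied with $\O = \J$, an operator $\O'$ satisfies $\O' \comp \J$ exactly when $\O' \sub \LL(\J)$. In particular $\LL(\J) \comp \J$, so $\LL(\J)$ is itself left-compatible with $\J$, and by the preceding lemma $\LL(\J)$ is a saturation. Thus $\LL(\J)$ is a saturation that is compatible with $\J$, and by the displayed equivalence it contains every other operator (a fortiori every other saturation) compatible with $\J$. Hence $\LL(\J)$ is the greatest saturation compatible with $\J$, which is precisely $\AA(\J)$ by Definition~\ref{def A(J) and J(A)}; so $\AA(\J) = \LL(\J)$. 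The explicit description (\ref{eq def A(J)}) is then just (\ref{eq def L(O)}) with $\O$ instantiated to $\J$.

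Second, for (\ref{eq. A-semi-Galois}): here I would simply restrict (\ref{eq. semi-Galois connection}) to the case where the left-hand operator $\O'$ is a saturation $\A$ and the right-hand operator $\O$ is the reduction $\J$, and then rewrite $\LL(\J)$ as $\AA(\J)$ using the identification just established. That gives $\A \comp \J$ iff $\A \sub \LL(\J) = \AA(\J)$, as claimed. One small point worth spelling out is why it suffices, in the existence argument, that $\LL(\J)$ dominates all \emph{operators} compatible with $\J$ rather than only all saturations: this is immediate since saturations are in particular operators, so the greatest compatible operator, when it happens to be a saturation, is automatically the greatest compatible saturation.

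I do not expect a genuine obstacle here: all the work has been front-loaded into the lemma asserting that $\LL(\O)$ is always a saturation and into the general equivalence (\ref{eq. semi-Galois connection}), both of which I am entitled to assume. The only thing requiring a moment's care is the logical bookkeeping in the first paragraph — namely, separating ``$\LL(\J)$ is compatible with $\J$'' (instance of (\ref{eq. semi-Galois connection}) with $\O' = \LL(\J)$, using reflexivity of $\sub$) from ``$\LL(\J)$ is a saturation'' (the lemma) and then combining the two to conclude maximality among saturations. Everything else is a direct substitution into results already proved.
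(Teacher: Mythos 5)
Your proof is correct and follows the same route the paper takes: the paper's (essentially implicit) argument is precisely that (\ref{eq. semi-Galois connection}) reduces the existence of $\AA(\J)$ to the lemma that $\LL(\J)$ is a saturation, after which $\AA(\J)=\LL(\J)$ and both displayed equivalences are instances of (\ref{eq def L(O)}) and (\ref{eq. semi-Galois connection}). Your extra bookkeeping (separating ``$\LL(\J)\comp\J$'' from ``$\LL(\J)$ is a saturation'' and noting that the greatest compatible operator, being a saturation, is the greatest compatible saturation) is exactly the right way to make the paper's terse deduction explicit.
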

Equivalence (\ref{eq def A(J)}) is nothing but the usual
definition of a closure operator associated with an interior
operator. In fact, it says that a point lies in the closure of
a subset if and only if all its open neighbourhoods intersect that subset.

%
%

\subsubsection{The construction of $\JJ(\A)$}

In the case of $\JJ(\A)$ the situation is somewhat different. In
fact, the constant operator $\RR(\A)$ is not a reduction since it is not
contractive in general, though it is monotone and
idempotent.\footnote{The equation $\JJ(\A)=\RR(\A)$ holds in the
case (and, classically, only in the case) $\A=\bot$.} For instance, $\JJ(id)=id$ while $\RR(id)=\top$. We can
nevertheless prove the following:

\begin{proposition}\label{prop. existence of JJ(A)}
The reduction $\JJ(\A)$ exists for every $\A\in SAT(S)$.
\end{proposition}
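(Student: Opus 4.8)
The goal is to show that for every saturation $\A$ there is a greatest reduction compatible with $\A$. The obvious candidate, in analogy with the construction of $\AA(\J)=\LL(\J)$, would be $\RR(\A)$; but as the footnote just before the statement points out, $\RR(\A)$ is the constant operator on the largest set splitting $\A$, and this is almost never contractive. So the plan is to \emph{force contractivity}: take the join, inside $RED(S)$, of all reductions compatible with $\A$, and show this join is itself compatible with $\A$. Concretely, I would set
\begin{equation*}
\JJ(\A)\ \stackrel{def}{=}\ \bigvee\{\J\in RED(S)\ |\ \A\comp\J\}
\end{equation*}
where the join is the pointwise one (which by Proposition \ref{prop infAi and supJi} stays inside $RED(S)$, since $RED(S)$ is a sub-suplattice of all operators). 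The set being joined is inhabited (it contains $\bot$, by (\ref{eq. compatibility casi banali})), so there is no predicativity-free lunch here — the proof is impredicative, as announced — but intuitionistically it is fine.

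**Key steps.** First, note $\JJ(\A)\in RED(S)$ by Proposition \ref{prop infAi and supJi}(2). Second, and this is the crux, I must check $\A\comp\JJ(\A)$. This is exactly Lemma \ref{lemma comp. and unions}(1): if $\A\comp\J_i$ for every $i$ in some index set, then $\A\comp(\bigvee_i\J_i)$. Here the index set ranges over $\{\J\in RED(S)\mid\A\comp\J\}$, so the hypothesis of the lemma holds by construction, and we conclude $\A\comp\JJ(\A)$. Third, maximality: if $\J\in RED(S)$ and $\A\comp\J$, then $\J$ is one of the operators in the join, so $\J\sub\JJ(\A)$ by definition of join. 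Together, steps two and three say precisely that $\JJ(\A)$ is the greatest reduction compatible with $\A$, which is the claim. (As a bonus one gets $\J\comp$-with-$\A$ iff $\J\sub\JJ(\A)$, the reduction-side analogue of (\ref{eq. A-semi-Galois}); I would expect the paper to record this as a corollary right after.)

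**Where the difficulty lies.** The entire argument is a two-line application of Lemma \ref{lemma comp. and unions}(1) plus the sub-suplattice property of $RED(S)$ — so there is no real obstacle to \emph{existence}. The genuinely delicate point, which this proposition deliberately sidesteps, is getting an \emph{explicit} pointwise description of $\JJ(\A)$ analogous to (\ref{eq def A(J)}) for $\AA(\J)$; there the candidate $\LL(\J)$ was already a saturation, whereas here $\RR(\A)$ fails to be contractive and one only recovers $\JJ(\A)$ as "$\RR(\A)$ cut down to a reduction", i.e. the largest reduction below the constant operator $\RR(\A)$ — equivalently $\J_\P$ for $\P=\{U\sub S\mid U\sub\text{the largest set splitting }\A\}=Fix$ of that constant operator. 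Pinning that down cleanly (and, later, doing it predicatively) is the work that the subsequent sections must do; the present statement only needs the soft impredicative join argument above.
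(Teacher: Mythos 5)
Your proof is correct and is essentially identical to the paper's: the paper also defines $\JJ(\A)\stackrel{def}{=}\bigvee\{\J\in RED(S)\ |\ \A\comp\J\}$ and invokes Lemma \ref{lemma comp. and unions} together with Proposition \ref{prop infAi and supJi}. Only your closing aside is off: $\JJ(\A)$ is \emph{not} the largest reduction below the constant operator $\RR(\A)$ (that would be $V\mapsto V\cap W$ with $W$ the largest splitting subset, since subsets of a splitting subset need not split); the explicit description the paper later proves is $\JJ(\A)V=\bigcup\{Z\sub V\ |\ Z\textrm{ splits }\A\}$, i.e.\ $\J_\P$ for $\P$ the family of \emph{splitting} subsets.
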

\begin{proof}
Put $\JJ(\A)$ $\stackrel{def}{=}$ $\bigvee\{\J\in RED(S)$ $|$
$\A\comp\J\}$ and apply lemma \ref{lemma comp. and unions} and
proposition \ref{prop infAi and supJi}.
\end{proof}

The explicit construction of $\JJ(\A)$ that  we are going to
present has been inspired by the results in \cite{mlsa} (see also section \ref{section predicative}).
We are going to characterize $\JJ(\A)(V)$ as the largest subset
of $V$ that splits $\A$, according to
definition \ref{def. splitting subset}.

\begin{proposition}\label{caratterizzazione di J(A)}
For every saturation $\A$, the reduction $\JJ(\A)$ satisfies:
\begin{equation}\label{eq def J(A)}
a\in\JJ(\A)\,V\quad\Longleftrightarrow\quad(\exists\, Z\sub S)(a\in
Z\sub V\ \land\ Z\textrm{ splits }\A)\end{equation}(for all $a\in S$
and $V\sub S$). Moreover:
\begin{equation}\label{eq. J-semi-Galois}\A\comp\J\qquad\textrm{if and only
if}\qquad\J\sub\JJ(\A)\, .\end{equation}
\end{proposition}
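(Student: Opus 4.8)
The plan is to prove the two assertions of Proposition~\ref{caratterizzazione di J(A)} in turn, reusing the constructions already established. For the equivalence (\ref{eq. J-semi-Galois}), the implication from $\A\comp\J$ to $\J\sub\JJ(\A)$ is immediate from the definition of $\JJ(\A)$ as $\bigvee\{\J'\in RED(S)\ |\ \A\comp\J'\}$ in Proposition~\ref{prop. existence of JJ(A)}. For the converse, I would use item~$1$ of Lemma~\ref{lemma Trentinaglia}: if $\J\sub\JJ(\A)$ and $\A\comp\JJ(\A)$ (which holds since $\JJ(\A)$ is \emph{the} greatest compatible reduction, hence in particular compatible), then $\A\comp\J$. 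So the equivalence reduces to knowing $\A\comp\JJ(\A)$, which is part of what Proposition~\ref{prop. existence of JJ(A)} guarantees.

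For the explicit formula (\ref{eq def J(A)}), I would let $\J'$ denote the operator defined by the right-hand side, i.e. $a\in\J'(V)$ iff there exists $Z\sub S$ with $a\in Z\sub V$ and $Z$ splitting $\A$. First I would check $\J'\in RED(S)$: contractivity is clear since $Z\sub V$; monotonicity is clear since enlarging $V$ only enlarges the family of eligible $Z$; idempotency follows because if $a\in Z\sub \J'(V)$ with $Z$ splitting $\A$, then that same $Z$ witnesses $a\in\J'(V)$ — here I would note that $\J'(V)$ itself need not split $\A$, but $Z$ does, and $Z\sub V$, so idempotency is really the observation that $\J'(\J'(V))\sub\J'(V)$ via transitivity of $\sub$ together with the fact that any $Z$ contained in $\J'(V)$ is a subset witnessing membership relative to $V$ only if $Z\sub V$ — so more carefully, if $Z\sub\J'(V)$ splits $\A$ then each point of $Z$ lies in some splitting subset of $V$, and by the sub-suplattice lemma the union of all splitting subsets contained in $V$ is the largest one, call it $Z_V$; then $Z\sub Z_V$ and $Z_V\sub V$ splits $\A$, so $Z\sub\J'(V)$ forces $Z\sub Z_V$, whence $\J'(\J'(V))\sub\J'(V)$.

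Next I would show $\A\comp\J'$: suppose $\A U\overlap\J'(V)$, so there is $a\in\A U$ with $a\in Z$ for some splitting $Z\sub V$; since $Z$ splits $\A$ and $\A U\overlap Z$, we get $U\overlap Z$, and since $Z\sub V$ hence $Z\sub\J'(V)$ (as $Z$ is a witness at each of its own points), we conclude $U\overlap\J'(V)$. Then I would show $\J'$ is the \emph{greatest} compatible reduction: given any $\J\in RED(S)$ with $\A\comp\J$, I must show $\J\sub\J'$. Fix $V$; I claim $\J V$ splits $\A$. Indeed $\A\comp\J$ means $\A U\overlap\J V\Rightarrow U\overlap\J V$ for all $U$, which is exactly the statement that $Z:=\J V$ splits $\A$ in the sense of Definition~\ref{def. splitting subset}. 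Since $\J$ is contractive, $\J V\sub V$, so $Z=\J V$ is a splitting subset contained in $V$; hence for every $a\in\J V$ we have $a\in Z\sub V$ with $Z$ splitting $\A$, i.e. $a\in\J'(V)$. Thus $\J\sub\J'$, and combined with $\A\comp\J'$ and the defining maximality this gives $\J'=\JJ(\A)$, establishing (\ref{eq def J(A)}).

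The only delicate point is the idempotency of $\J'$, precisely because — in contrast with $\AA(\J)=\LL(\J)$ where the companion operator is applied pointwise — here $\J'(V)$ is not itself guaranteed to split $\A$; one genuinely needs the sub-suplattice lemma to identify $\J'(V)$ with the \emph{largest} splitting subset of $V$ and then argue idempotency from that identification. Once that is in hand, compatibility and maximality are routine unwindings of Definition~\ref{def. splitting subset} and Definition~\ref{def. compatibility}, and the semi-Galois equivalence (\ref{eq. J-semi-Galois}) follows formally as above.
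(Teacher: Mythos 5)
Your overall route is the same as the paper's: define the candidate operator $\O V=\bigcup\{Z\sub V\ |\ Z\textrm{ splits }\A\}$, show $\JJ(\A)\sub\O$ because each $\J V$ with $\A\comp\J$ is a splitting subset contained in $V$, and show $\O\sub\JJ(\A)$ by verifying $\A\comp\O$; the equivalence (\ref{eq. J-semi-Galois}) is obtained, as in the paper, from the definition of $\JJ(\A)$ as the greatest compatible reduction together with lemma \ref{lemma Trentinaglia} (though you need item 2 of that lemma, the one about reductions $\J'\sub\J$, not item 1, which concerns saturations). Your compatibility and maximality arguments are correct, and your remark that a splitting $Z\sub V$ satisfies $Z\sub\O V$, so that $U\overlap Z$ already gives $U\overlap\O V$, is exactly the point needed there.

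The one genuine flaw is the verification of idempotency. For a contractive operator the inclusion $\O\O V\sub\O V$ is automatic (apply contractivity to $W=\O V$); the direction that requires an argument is $\O V\sub\O\O V$, and your paragraph, despite its length, only ever concludes $\O\O V\sub\O V$ and never addresses the other inclusion. The missing step is the one-liner you in fact use in the compatibility argument: if $a\in Z\sub V$ with $Z$ splitting $\A$, then $Z\sub\O V$ (each point of $Z$ has $Z$ itself as witness), so the same $Z$ witnesses $a\in\O\O V$. Alternatively --- and this is what the paper does --- you can avoid the hand verification entirely by observing that $\O$ is $\J_\P$ of equation (\ref{eq def A and J from a family}) for $\P$ the family of all subsets splitting $\A$, and $\J_\P$ is already known to be a reduction. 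Your proposed identification of $\O V$ with the largest splitting subset $Z_V$ of $V$ via the sub-suplattice lemma would also work (since $Z_{Z_V}=Z_V$), but as written you again extract from it only the trivial inclusion.
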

\begin{proof}
Let $\O$ be the operator defined by the right-hand side of (\ref{eq
def J(A)}); so $\O V$ = $\bigcup\{Z\sub V\ |\ Z\
\textrm{splits}\ \A\}$. This shows that $\O$ is a reduction, namely
$\J_P$ of equation (\ref{eq def A and J from a family}) with respect
to the family $P$ = $\{Z\ |\ Z$ splits $\A\}$. Moreover, $\JJ(\A)V$
= $\bigcup\{\J V$ $|$ $\J\in RED(S)$ and $\A\comp\J\}$ $\sub$
$\bigcup\{Z$ $|$ $Z\sub V$ and $Z$ splits $\A\}$ = $\O(V)$ because $\J V\sub V$ and $\J V$ splits $\A$ for all $\J$ such that $\A\comp\J$.
Since by definition  $\JJ(\A)$ is the greatest reduction which is compatible with $\A$,
to prove the opposite inclusion $\O \sub \JJ(\A)$, it is sufficient to show that $\A\comp\O$.
So let $\A U\overlap\O V$; this means that $a\in Z\sub V$ for some
$a\in\A U$ and some $Z$ that splits $\A$. In particular, $\A
U\overlap Z$ and hence $U\overlap Z$; a fortiori, $U\overlap V$ as
wished.

Equation (\ref{eq. J-semi-Galois}) follows from the
definition of $\JJ(\A)$ and from item 2 of lemma \ref{lemma
Trentinaglia}.
\end{proof}

It is easy to check that $Fix(\JJ(\A))$ = $\{Z$ $|$
$Z$ splits $\A\}$, that is, the subsets that split $\A$ are
precisely the fixed points of $\JJ(\A)$.

%

\subsection{The Galois connection}

An immediate consequence of the equivalences (\ref{eq. A-semi-Galois}) and (\ref{eq.
J-semi-Galois}) is the following:

\begin{proposition}\label{prop Galois connection} For every set $S$ and
for all $\A\in SAT(S)$ and $\J\in RED(S)$, the following holds:
\begin{equation}\label{eq. Galois connection}
\A\sub\AA(\J)\quad\Longleftrightarrow\quad\A\comp\J\quad
\Longleftrightarrow\quad\J\sub\JJ(\A)\, .
\end{equation} Therefore the two maps
$\AA : RED(S) \rightarrow SAT(S)$ and $\JJ : SAT(S) \rightarrow
RED(S)$ form an (antitone) Galois connection \cite{birkhoff,mac}.
\end{proposition}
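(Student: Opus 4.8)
The plan is to derive Proposition~\ref{prop Galois connection} essentially for free from the two equivalences already established, and then to check the (completely routine) abstract fact that a pair of antitone maps satisfying such a biconditional is a Galois connection. First I would observe that the chain of equivalences in (\ref{eq. Galois connection}) is obtained simply by juxtaposing equivalence (\ref{eq. A-semi-Galois}) from the corollary on $\AA(\J)$, namely $\A\comp\J \Leftrightarrow \A\sub\AA(\J)$, with equivalence (\ref{eq. J-semi-Galois}) from Proposition~\ref{caratterizzazione di J(A)}, namely $\A\comp\J \Leftrightarrow \J\sub\JJ(\A)$. Transitivity of $\Leftrightarrow$ then yields $\A\sub\AA(\J) \Leftrightarrow \J\sub\JJ(\A)$, which is the defining condition of an (antitone) Galois connection between the posets $(SAT(S),\sub)$ and $(RED(S),\sub)$.

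Next I would record why this makes $(\AA,\JJ)$ a Galois connection in the sense of \cite{birkhoff,mac}. It suffices to note that $\AA$ and $\JJ$ are antitone: if $\J_1\sub\J_2$ then every saturation compatible with $\J_2$ is, by item~1 of Lemma~\ref{lemma Trentinaglia} applied with the roles suitably arranged — more directly, by item~1 of Lemma~\ref{lemma Trentinaglia esteso} — compatible with $\J_1$, so $\AA(\J_2)\sub\AA(\J_1)$; dually for $\JJ$ using item~2 of Lemma~\ref{lemma Trentinaglia}. Actually antitonicity is itself a consequence of the biconditional together with the fact that $\AA(\J)$ and $\JJ(\A)$ are the values of the maps: from $\A\sub\AA(\J)\Leftrightarrow\J\sub\JJ(\A)$ one gets, taking $\A=\AA(\J)$, that $\J\sub\JJ(\AA(\J))$, and symmetrically $\A\sub\AA(\JJ(\A))$, i.e.\ the units of the adjunction; and then antitonicity of each map follows in the standard way. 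So the cleanest presentation is: state that (\ref{eq. Galois connection}) is immediate by transitivity from (\ref{eq. A-semi-Galois}) and (\ref{eq. J-semi-Galois}), then remark that this biconditional, by a well-known argument, is exactly what it means for the antitone pair $(\AA,\JJ)$ to form a Galois connection, citing \cite{birkhoff,mac}.

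There is essentially no obstacle here: all the content has been front-loaded into the constructions of $\AA(\J)$ and $\JJ(\A)$ and the verifications that $\LL(\J)$ is a saturation and that the operator on the right-hand side of (\ref{eq def J(A)}) is compatible with $\A$. The only thing to be slightly careful about is the direction of monotonicity — since this is an \emph{antitone} (order-reversing) Galois connection, one should not misstate it as a covariant adjunction — and the fact that $\AA$ has domain $RED(S)$ and codomain $SAT(S)$ while $\JJ$ goes the other way, so that the composites $\AA\JJ$ and $\JJ\AA$ are closure, respectively interior-like, operators on $SAT(S)$ and $RED(S)$; but for the bare statement being proved, only the biconditional is needed. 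I would therefore keep the proof to two or three lines: cite (\ref{eq. A-semi-Galois}) and (\ref{eq. J-semi-Galois}), invoke transitivity, and conclude.
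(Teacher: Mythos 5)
Your proposal is correct and matches the paper exactly: the paper also presents this proposition as an immediate consequence of the equivalences (\ref{eq. A-semi-Galois}) and (\ref{eq. J-semi-Galois}), chained by transitivity, with the Galois-connection claim following from the standard abstract fact you cite. No discrepancies to report.
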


As with any Galois connection, we obtain:
\begin{corollary}\label{corollary properties Galois} The maps $\AA$
and $\JJ$ satisfy:
\begin{enumerate}
\item $\AA$ and $\JJ$ are antitone (that is, order-reversing);
\item $\A\sub\AA\JJ(\A)$ and $\J\sub\JJ\AA(\J)$;
\item $\AA\JJ\AA=\AA$ and $\JJ\AA\JJ=\JJ$;
\item $\AA(\bigcup_i\J_i)=\bigcap_i\AA(\J_i)$ and
$\JJ(\bigcup_i\A_i)=\bigcap_i\JJ(\A_i)$.
\end{enumerate}
A consequence of 3  is that $\A=\AA(\J)$ for some $\J$ if and only if $\AA\JJ(\A)=\A$.
Dually, $\J=\JJ(\A)$ for some $\A$ if and only if $\JJ\AA(\J)=\J$.
\end{corollary}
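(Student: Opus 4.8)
The plan is to derive all four items, and the closing remark, purely formally from the single equivalence (\ref{eq. Galois connection}) of Proposition \ref{prop Galois connection}; no further facts about overlap or splitting are needed, so this is the standard bookkeeping for an antitone Galois connection. I would establish items 1 and 2 first, then bootstrap item 3 from them, and finally deduce item 4 together with the final remark.

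For item 2, instantiate (\ref{eq. Galois connection}) at the reduction $\JJ(\A)$: reading the equivalence $\A\sub\AA(\J)\Leftrightarrow\J\sub\JJ(\A)$ from right to left with $\J:=\JJ(\A)$, the trivial inclusion $\JJ(\A)\sub\JJ(\A)$ yields $\A\sub\AA\JJ(\A)$; symmetrically, instantiating at the saturation $\AA(\J)$ and reading left to right gives $\J\sub\JJ\AA(\J)$. For item 1, suppose $\J_1\sub\J_2$. By item 2, $\J_1\sub\J_2\sub\JJ\AA(\J_2)$, so (\ref{eq. Galois connection}) applied with $\A:=\AA(\J_2)$ and $\J:=\J_1$ gives $\AA(\J_2)\sub\AA(\J_1)$; the monotone case of $\JJ$ is dual. (Alternatively, item 1 is immediate from Lemma \ref{lemma Trentinaglia}: if $\J_1\sub\J_2$ then $\A\comp\J_2$ implies $\A\comp\J_1$, so the greatest saturation compatible with $\J_2$ is among those compatible with $\J_1$.) For item 3, item 2 applied to the saturation $\AA(\J)$ gives $\AA(\J)\sub\AA\JJ\AA(\J)$, while applying the antitone map $\AA$ (item 1) to the inclusion $\J\sub\JJ\AA(\J)$ of item 2 gives $\AA\JJ\AA(\J)\sub\AA(\J)$; hence $\AA\JJ\AA=\AA$, and $\JJ\AA\JJ=\JJ$ follows symmetrically.

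For item 4, the inclusion $\AA(\bigcup_i\J_i)\sub\bigcap_i\AA(\J_i)$ is immediate from item 1, since $\J_j\sub\bigcup_i\J_i$ forces $\AA(\bigcup_i\J_i)\sub\AA(\J_j)$ for every $j$. For the converse I would use Proposition \ref{prop infAi and supJi} to observe that $\bigcup_i\J_i\in RED(S)$ and $\bigcap_i\AA(\J_i)\in SAT(S)$, so (\ref{eq. Galois connection}) is applicable to them: $\bigcap_i\AA(\J_i)\sub\AA(\bigcup_i\J_i)$ holds iff $\bigcup_i\J_i\sub\JJ(\bigcap_i\AA(\J_i))$, i.e. iff $\J_j\sub\JJ(\bigcap_i\AA(\J_i))$ for every $j$, i.e. (again by (\ref{eq. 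Galois connection})) iff $\bigcap_i\AA(\J_i)\sub\AA(\J_j)$ for every $j$ — which is true. The dual identity $\JJ(\bigcup_i\A_i)=\bigcap_i\JJ(\A_i)$ is proved the same way. Finally, for the concluding equivalence: if $\AA\JJ(\A)=\A$ then $\A=\AA(\J)$ with $\J:=\JJ(\A)$; conversely, if $\A=\AA(\J)$ then $\AA\JJ(\A)=\AA\JJ\AA(\J)=\AA(\J)=\A$ by item 3 — and dually for reductions.

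There is no real obstacle here; the only point that deserves explicit attention is in item 4, where one must invoke Proposition \ref{prop infAi and supJi} to know that the join of reductions remains a reduction and the meet of saturations remains a saturation, so that the equivalence (\ref{eq. Galois connection}), which is stated only for $SAT(S)$ and $RED(S)$, may legitimately be applied to these combined operators.
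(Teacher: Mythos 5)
Your derivation is correct and is essentially the paper's own: the paper offers no proof, merely remarking ``As with any Galois connection, we obtain\dots'', and your computations are precisely the standard bookkeeping from equivalence (\ref{eq. Galois connection}) that justifies that remark, including the apt appeal to Proposition \ref{prop infAi and supJi} in item 4. The one loose end (shared with the paper's own formulation) is the second identity of item 4: ``proved the same way'' tacitly requires reading $\bigcup_i\A_i$ and $\bigcap_i\JJ(\A_i)$ as the join in $SAT(S)$ and the meet in $RED(S)$ rather than as pointwise union and intersection, since Proposition \ref{prop infAi and supJi} guarantees pointwise closure only for meets of saturations and joins of reductions.
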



By remark \ref{A(J) and J(A) classically}, reasoning classically $\AA\JJ$ and $\JJ\AA$ become
the identity on $RED(S)$ and $SAT(S)$, respectively. So every
saturation $\A$ is of the form $\AA(\J)$ and every reduction $\J$ is
of the form $\JJ(\A)$. This is not provable intuitionistically (see
section \ref{counterexamples}).

Some other consequences of the Galois connection will be studied in the following sections.

\section{Basic topologies}\label{section basic topologies}

A set $S$ equipped with a saturation $\A$ and with a reduction $\J$
such that $\A\comp\J$ can be seen as a generalized topological
space. Following \cite{some points,bp}, we put:

\begin{definition}
A \emph{basic topology} is a triple $(S,\A,\J)$ where $S$ is a set,
$\A\in SAT(S)$, $\J\in RED(S)$ and $\A\comp\J$.
\end{definition}

A basic topology generalizes a topological space not only because
$\A$ and $\J$ are not required to preserve finite joins and meets,
respectively, but also because $\A$ could be smaller than that
determined by $\J$, that is, $\AA(\J)$; and dually for $\J$.

In this section, we fix a set $S$ and we consider the collection
$\mathbf{BTop}(S)$ of all basic topologies on $S$. When $S$ is fixed, we write
 $[\A,\J]$ for the basic topology $(S,\A,\J)$. The following
definition makes $\mathbf{BTop}(S)$ a partial order.

\begin{definition}
Let $[\A_1,\J_1]$ and $[\A_2,\J_2]$ be two basic topologies on a set
$S$. We say that $[\A_1,\J_1]$ is \emph{coarser} than $[\A_2,\J_2]$
(equivalently, $[\A_2,\J_2]$ is \emph{finer} than $[\A_1,\J_1]$), and write
$[\A_1,\J_1]\leq[\A_2,\J_2]$, if
$\A_2\sub\A_1$ and $\J_1\sub\J_2$.
\end{definition}

The terms coarser and finer are imported from general topology. They
are justified by the fact that $\J_1\sub\J_2$ and $\A_2\sub\A_1$
mean precisely that $Fix(\J_1)\sub Fix(\J_2)$ and $Fix(\A_1)\sub
Fix(\A_2)$. With respect to this partial order, $\mathbf{BTop}(S)$
becomes a suplattice where the join of a family $[\A_i,\J_i]$ is the
basic topology $[\bigwedge_i\A_i,\bigvee_i\J_i]$.
This is indeed a basic topology by proposition~\ref{prop infAi and supJi}
and the fact that
$\bigwedge_i\A_i\comp\bigvee_i\J_i$, which is proved as follows.
If $\bigcap_i\A_i
U\overlap\bigcup_i\J_i V$, then there exists $k$ such that
$\bigcap_i\A_i U\overlap\J_k V$. So $\A_k U\overlap\J_k V$ and hence
$U\overlap\J_k V$ because $\A_k\comp\J_k$; thus
$U\overlap\bigcup_i\J_i V$.

\subsection{Reduced and saturated basic topologies}

The suplattices $(RED(S),\sub)$ and $(SAT(S),\supseteq)$
can be embedded canonically in $(\mathbf{BTop}(S),\leq)$ by
identifying $\J$ with $[\AA(\J),\J]$ and $\A$ with $[\A,\JJ(\A)]$.
This motivates the following:

\begin{definition}
We call \emph{reduced} a basic topology of the form $[\AA(\J),\J]$
and \emph{saturated} one of the form $[\A,\JJ(\A)]$.
\end{definition}

By (\ref{eq. AA casi banali}), the basic topologies $[id,id]$ and $[\top,\bot]$ are both reduced
and saturated at the same time. As a consequence, $[id,\bot]$ is an
example of a basic topology which is neither reduced nor saturated.
By the way, $[id,\bot]$ is also a counterexample to the implication
$\A\comp\J\ \Rightarrow\ \AA(\J)\comp\JJ(\A)$.

>From a classical point of view, by remark \ref{A(J) and J(A) classically}, a basic topology is reduced if and
only if it is saturated; moreover, the following identities hold:
$$[\AA(\J),\J]=[\AA(\J),\JJ\AA(\J)]\qquad\textrm{and}\qquad
[\A,\JJ(\A)]=[\AA\JJ(\A),\JJ(\A)]$$ for all $\J$ and $\A$. In other words, each reduction
$\J$ represents the same basic topology as its corresponding saturation $\AA(\J)$.
Similarly, for every saturation $\A$,   $\A$ and $\JJ(\A)$
correspond to the same  basic topology.
We shall see in section
\ref{counterexamples} that all this no longer holds intuitionistically.

The identity $\AA\JJ\AA=\AA$  says that $\AA(\J)$ and $\JJ\AA(\J)$
give rise to the same basic topology.
Similarly, $\JJ(\A)$ and $\AA\JJ(\A)$ correspond to the same basic topology because
$\JJ\AA\JJ=\JJ$.
 Finally, provided that $\J$ and $\A$ are identified with the corresponding basic topologies,
 $\J\leq\A$ means precisely that  $\A\comp\J$.
From this perspective, lemma \ref{lemma Trentinaglia} follows from transitivity of $\leq$.

\subsubsection{A decomposition of the Galois connection}

Let $I_R$ and $I_S$ be the functors  embedding
$(RED(S),\sub)$ and $(SAT(S),\supseteq)$, respectively,
into $\mathbf{BTop}(S)$.
So $I_R(\J)=[\AA(\J),\J]$ and $I_S(\A)=[\A,\JJ(\A)]$. In
the opposite direction, we consider a ``forgetful'' map
from $\mathbf{BTop}(S)$ to $RED(S)$ which sends $[\A,\J]$ to $\J$
and ``forgets'' $\A$; similarly for $SAT(S)$. So we put
$U_R([\A,\J])=\J$ and $U_S([\A,\J])=\A$.
Then 
$U_R:(\mathbf{BTop}(S),\leq)\rightarrow(RED(S),\sub)$ and
$U_S:(\mathbf{BTop}(S),\leq)\rightarrow(SAT(S),\supseteq)$
are trivially monotone, and hence functors.

\begin{proposition}\label{aggiunzione UI}
The functor $U_R$ is right adjoint to the embedding $I_R$.
Dually,  $U_S$ is left adjoint to $I_S$. In symbols,
$I_R\dashv U_R$ and $U_S\dashv I_S$.
$$\xymatrix@=5pt{
\big(RED(S),\sub\big)\quad \ar@{->}[rrr]<1ex>^{I_R} & & & \quad
\mathbf{BTop}(S) \quad \ar@{->}[lll]<1ex>^{U_R}
\ar@{->}[rrr]<1ex>^{U_S} & & & \quad \big(SAT(S),\supseteq\big)
\ar@{->}[lll]<1ex>^{I_S} }$$
\end{proposition}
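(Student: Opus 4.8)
The plan is to verify the two adjunctions $I_R\dashv U_R$ and $U_S\dashv I_S$ directly from the definitions of the partial orders involved, using the Galois connection (\ref{eq. Galois connection}) as the sole nontrivial ingredient. Since everything here takes place between posets, an adjunction $F\dashv G$ between posets $P$ and $Q$ is simply the statement that $F(p)\leq_Q q \Longleftrightarrow p\leq_P G(q)$ for all $p\in P$, $q\in Q$; there are no coherence conditions to check beyond this. So the proof reduces to two biconditionals, each unwound mechanically.

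For $I_R\dashv U_R$, I would fix $\J\in RED(S)$ and a basic topology $[\A,\J']\in\mathbf{BTop}(S)$ and compute. By definition of $\leq$ on $\mathbf{BTop}(S)$, the relation $I_R(\J)=[\AA(\J),\J]\leq[\A,\J']$ means $\A\sub\AA(\J)$ and $\J\sub\J'$. Now $\A\sub\AA(\J)$ is equivalent to $\A\comp\J$ by (\ref{eq. A-semi-Galois}), and since $[\A,\J']$ is a basic topology we have $\A\comp\J'$; but $\J\sub\J'$ already holds in the conjunction, so actually I should argue the other direction carefully: $\A\sub\AA(\J)$ together with $\J\sub\J'$. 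The claim is that this conjunction is equivalent to $\J\sub\J'=U_R([\A,\J'])$, i.e.\ that the first conjunct $\A\sub\AA(\J)$ is automatic given the second. Indeed, if $\J\sub\J'$ then, since $[\A,\J']$ is a basic topology, $\A\comp\J'$ holds, hence $\A\comp\J$ by item 2 of lemma \ref{lemma Trentinaglia}, hence $\A\sub\AA(\J)$ by (\ref{eq. A-semi-Galois}). Conversely $\A\sub\AA(\J)$ is implied by $\J\sub\J'$ as just shown, and the converse implication ``$I_R(\J)\leq[\A,\J']$ implies $\J\sub\J'$'' is immediate from the definition of $\leq$. So $I_R(\J)\leq[\A,\J'] \Longleftrightarrow \J\sub\J' \Longleftrightarrow \J\sub_{RED(S)} U_R([\A,\J'])$, which is exactly $I_R\dashv U_R$.

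For the dual adjunction $U_S\dashv I_S$, I would fix $[\A,\J]\in\mathbf{BTop}(S)$ and $\A'\in SAT(S)$, and recall that the order on the codomain is $\supseteq$, so ``$U_S([\A,\J])\leq_{SAT(S),\supseteq}\A'$'' means $\A'\sub\A$. The target relation on the other side is $[\A,\J]\leq I_S(\A')=[\A',\JJ(\A')]$, which by definition of $\leq$ on $\mathbf{BTop}(S)$ means $\A'\sub\A$ and $\J\sub\JJ(\A')$. So I must show $\A'\sub\A \Longleftrightarrow (\A'\sub\A \text{ and } \J\sub\JJ(\A'))$, i.e.\ that $\A'\sub\A$ forces $\J\sub\JJ(\A')$. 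If $\A'\sub\A$, then since $\A\comp\J$, item 1 of lemma \ref{lemma Trentinaglia} gives $\A'\comp\J$, and by (\ref{eq. J-semi-Galois}) this is $\J\sub\JJ(\A')$. The reverse implication is trivial. Hence $U_S([\A,\J])\leq\A' \Longleftrightarrow [\A,\J]\leq I_S(\A')$, giving $U_S\dashv I_S$. As a sanity check one notes that the composite $U_R I_S$ recovers $\JJ$ and $U_S I_R$ recovers $\AA$, so this proposition genuinely decomposes the Galois connection of proposition \ref{prop Galois connection}.

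The steps are all routine once the setup is in place; the only point requiring a little care — and the one I would flag as the ``main obstacle'', though it is mild — is recognizing that in each biconditional one of the two conjuncts defining $\leq$ on $\mathbf{BTop}(S)$ is redundant, being forced by the compatibility condition built into the definition of a basic topology via lemma \ref{lemma Trentinaglia} and the semi-Galois equivalences (\ref{eq. A-semi-Galois}) and (\ref{eq. J-semi-Galois}). Keeping the variance straight — $RED(S)$ ordered by $\sub$ but $SAT(S)$ by $\supseteq$, and $I_R$ a right-facing embedding while $U_S$ is the left adjoint — is the other thing to be attentive to, but the diagram in the statement already fixes the conventions. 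No intuitionistic subtleties arise beyond those already absorbed into the cited lemmas, so the proof is constructive throughout.
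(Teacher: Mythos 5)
Your proof is correct and takes essentially the same route as the paper's: unwind the poset adjunction condition on each side and observe that the extra conjunct in the definition of $\leq$ on $\mathbf{BTop}(S)$ is forced by the compatibility built into a basic topology, via lemma~\ref{lemma Trentinaglia} and the equivalences (\ref{eq. A-semi-Galois}) and (\ref{eq. J-semi-Galois}). The paper phrases that redundancy step through antitonicity of $\JJ$ rather than lemma~\ref{lemma Trentinaglia}, but this is the same argument in different clothing.
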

\begin{proof}
We must prove that
$$[\AA(\J),\J]\leq[\A',\J']\Longleftrightarrow\J\sub\J'\quad\textrm{ and }
\quad\A'\supseteq\A\Longleftrightarrow[\A',\J']\leq[\A,\JJ(\A)]$$
for all $\J\in RED(S)$, $\A\in SAT(S)$ and
$[\A',\J']\in\mathbf{BTop}(S)$. We check the latter; the former has
a dual proof.
From $\A'\supseteq\A$ one has $\JJ(\A')\sub\JJ(\A)$ because $\JJ$ is antitone;
from $\A'\comp\J'$ one has $\J'\sub\JJ(\A')$  and hence $\J'\sub\JJ(\A)$.
Together with $\A'\supseteq\A$, this gives
the claim $[\A',\J']\leq[\A,\JJ(\A)]$.
The    other direction is trivial.
\end{proof}

The composition of the two adjunctions gives: $U_S\,I_R\dashv
U_R\,I_S$ between $(RED(S),\sub)$ and $(SAT(S),\supseteq)$. By
unfolding definitions, one sees that this is nothing but the Galois
connection between $\AA$ and $\JJ$.

\subsubsection{Reduction and saturation of a basic topology}

Let us consider the following monotone maps on
$\mathbf{BTop}(S)$: $$(\ )^R\quad\stackrel{def}{=}\quad I_R\,
U_R\quad\textrm{ and }\quad(\ )^S\quad\stackrel{def}{=}\quad I_S\,
U_S\ .$$ By unfolding definitions, one gets:
$$[\A,\J]^R=[\AA(\J),\J]\quad\textrm{ and
}\quad[\A,\J]^S=[\A,\JJ(\A)]$$ for every basic topology $[\A,\J]$.
We call these ``the reduction'' and ``the saturation'' of the basic
topology $[\A,\J]$.
The following is a standard consequence of the adjunctions in
proposition~\ref{aggiunzione UI}.

\begin{corollary}
The endofunctors $(\ )^R$ and $(\ )^S$ are, respectively, a
reduction (comonad) and a saturation (monad) on the poset
$\mathbf{BTop}(S)$.
\end{corollary}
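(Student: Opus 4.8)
The plan is to invoke the general categorical fact that if $F \dashv G$ is an adjunction between posets (viewed as categories), then $GF$ is a monad and $FG$ is a comonad; equivalently, in order-theoretic language, $GF$ is a closure operator (inflationary, monotone, idempotent) and $FG$ is an interior operator (deflationary, monotone, idempotent) on the appropriate poset. Since proposition~\ref{aggiunzione UI} gives us $I_R \dashv U_R$ and $U_S \dashv I_S$, I would apply this fact twice: once with $F = I_R$, $G = U_R$ to conclude that $(\ )^R = I_R\, U_R = FG$ is a comonad, i.e. a reduction on $\mathbf{BTop}(S)$; and once with $F = U_S$, $G = I_S$ to conclude that $(\ )^S = I_S\, U_S = GF$ is a monad, i.e. a saturation on $\mathbf{BTop}(S)$.

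Concretely, I would just verify the three defining properties directly, since in a poset they are elementary. For $(\ )^R$: monotonicity is immediate because both $I_R$ and $U_R$ are monotone (stated just before proposition~\ref{aggiunzione UI} and implicit in $I_R$ being an embedding). Deflationarity, $[\A,\J]^R \leq [\A,\J]$, is the counit of the adjunction $I_R \dashv U_R$: unfolding, $[\AA(\J),\J] \leq [\A,\J]$ means $\A \sub \AA(\J)$ and $\J \sub \J$, and $\A \sub \AA(\J)$ holds precisely because $\A \comp \J$ (that is the defining condition of a basic topology, via equivalence~(\ref{eq. A-semi-Galois})). Idempotency: $([\A,\J]^R)^R = [\AA(\J),\J]^R = [\AA(\J),\J] = [\A,\J]^R$, using that $U_R I_R = \mathrm{id}$ on $RED(S)$ (the unit of $I_R \dashv U_R$ is an equality since $I_R$ is an embedding), so $(\ )^R(\ )^R = I_R U_R I_R U_R = I_R U_R = (\ )^R$. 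The argument for $(\ )^S$ is dual: monotone since $I_S, U_S$ are; inflationary because $[\A,\J] \leq [\A,\JJ(\A)]$ reduces to $\J \sub \JJ(\A)$, which again holds by $\A \comp \J$ via~(\ref{eq. J-semi-Galois}); and idempotent because $U_S I_S = \mathrm{id}$ on $SAT(S)$, so $(\ )^S(\ )^S = I_S U_S I_S U_S = I_S U_S = (\ )^S$.

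There is essentially no obstacle here; the content has already been done in proposition~\ref{aggiunzione UI}, and the corollary is the purely formal observation that a (co)monad arises from an adjunction — which, on posets, is the statement that $GF$ and $FG$ are the associated closure and interior operators. The only thing requiring a moment's care is bookkeeping: making sure the counit of $I_R \dashv U_R$ really gives deflationarity (not inflationarity) of $(\ )^R$ on $\mathbf{BTop}(S)$ ordered by $\leq$, and dually — but this is exactly why $I_R$ is the left adjoint and $U_S$ is the left adjoint, so the directions come out as claimed. I would therefore keep the proof to two or three lines, citing the adjunctions of proposition~\ref{aggiunzione UI} and the standard fact that the composite $FG$ (resp. $GF$) of an adjunction $F \dashv G$ between posets is a comonad (resp. monad), i.e. an interior (resp. closure) operator, and then noting that by the displayed formulas $(\ )^R$ and $(\ )^S$ act as $[\A,\J] \mapsto [\AA(\J),\J]$ and $[\A,\J] \mapsto [\A,\JJ(\A)]$ respectively.
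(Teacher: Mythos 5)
Your proposal is correct and matches the paper exactly: the paper states this corollary without proof, calling it ``a standard consequence of the adjunctions in proposition~\ref{aggiunzione UI}'', and your argument is precisely that standard fact (that $FG$ of an adjunction $F\dashv G$ is a comonad and $GF$ a monad), supplemented by a direct elementary verification that is consistent with the order on $\mathbf{BTop}(S)$ and with the identities $U_RI_R=\mathrm{id}$ and $U_SI_S=\mathrm{id}$.
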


In particular, for every basic topology $[\A,\J]$ one has:
$$[\A,\J]^R=[\AA(\J),\J]\ \leq\ [\A,\J]\ \leq\ [\A,\JJ(\A)]=[\A,\J]^S\ .$$ If
$T=[\A,\J]$, then $T^R$ is the greatest reduced basic topology below
$T$, while $T^S$ is the least saturated basic topology above $T$.
Clearly, $T$ is reduced if and only if $T$ $=$ $T^R$; similarly, $T$
is saturated if and only if $T$ $=$ $T^S$.
%

The following picture presents the general form of the lattice
freely generated by a basic topology $T$ with respect to the operations
$(\ )^R$ and $(\ )^S$.
We here write $\cong$ between two objects which are equal (when they are seen) as basic topologies.

$$
\xymatrix@=5pt{
 & {T^S=[\A,\JJ(\A)]\cong\A} & \\
 & & {T^{SR}=[\AA\JJ(\A),\JJ(\A)]\cong\JJ(\A)\cong\AA\JJ(\A)} \ar@{-}[ul] \\
{T=[\A,\J]} \ar@{-}[uur] & & \\
 & & {T^{RS}=[\AA(\J),\JJ\AA(\J)]\cong\AA(\J)\cong\JJ\AA(\J)} \ar@{-}[uu] \\
 & \ar@{-}[uul] {T^R=[\AA(\J),\J]\cong\J} \ar@{-}[ur] &}
$$

In fact, $T^S$, $T^{SR}$ and $T^{RS}$ are always saturated (because
$\JJ$ = $\JJ\AA\JJ$), so that any further application of $(\ )^S$ on
them gives no new result. Dually, $T^R$, $T^{SR}$ and $T^{RS}$ are
kept fixed by $(\ )^R$ since they are all reduced. All inclusions
are fairly obvious. For instance, to prove $T^{RS}\leq T^{SR}$,
start from $\A\sub\AA(\J)$ and $\J\sub\JJ(\A)$ (compatibility in
$T$); then apply $\JJ$ and $\AA$, respectively, to get (by item 1 of
corollary \ref{corollary properties Galois}) $\JJ\AA(\J)\sub\JJ(\A)$
and $\AA\JJ(\A)\sub\AA(\J)$.

If $T$ is reduced (that is $\A$ = $\AA(\J)$), not only $T=T^R$, but
also $T^{RS}$ $=$ $T^S$ and hence $T^{RS}$ $=$ $T^{SR}$ = $T^S$.
Therefore when $T$ is reduced the picture above collapses to $T\leq
T^S$. Dually, if $T$ is saturated, then $T=T^S$ and $T^R$ $=$
$T^{RS}$ $=$ $T^{SR}$. Hence when $T$ is saturated the picture
becomes $T^R\leq T$.

>From a classical point of view one has (see remark \ref{A(J) and
J(A) classically}): $T^R$ $=$ $[-\J-,\J]$ and $T^S$ $=$ $[\A,-\A-]$.
So the picture above simplifies to $T^{RS}$ $=$ $T^R$
$\leq$ $T$ $\leq$ $T^S$ $=$ $T^{SR}$.

The basic topology $T$ $=$ $[id,\bot]$ provides a counterexample to
all of the following equations: $T^R$ $=$ $T$, $T$ $=$ $T^S$ and
$T^{RS}$ $=$ $T^{SR}$. In fact, thanks to equations (\ref{eq. AA
casi banali}) we get $T^R$ $=$ $[\top,\bot]$ $=$ $T^{RS}$ and $T^S$
$=$ $[id,id]$ $=$ $T^{SR}$. In section~\ref{counterexamples} we will
give counterexamples to the (classically valid) equations $T^R$
$=$ $T^{RS}$ (``every reduced basic topology is saturated'') and
$T^S$ $=$ $T^{SR}$ (``every saturated basic topology is reduced'').

\section{Some counterexamples}\label{counterexamples}

Contrary to what happens classically, we are going to show that the classes of reduced and of saturated basic topologies are not equal intuitionistically.
Actually, neither of the two classes contains the other. We begin by showing 
several equivalent manifestations of the two inclusions.

\begin{proposition}\label{prop. equivalenti controesempi} The following are equivalent:
\begin{enumerate}
\item every saturated basic topology is reduced;
\item $T^{SR}=T^S$, for every basic topology $T$;
\item $\AA\JJ(\A)=\A$, for every $\A\in SAT(S)$;
\item $\AA$ is surjective (every $\A$ is of the form $\AA(\J)$ for some $\J$);
\item $\JJ$ is injective ($\JJ(\A)=\JJ(\A')$ only if $\A=\A'$).
\end{enumerate} Dually, also the following are equivalent:
\begin{enumerate}
\item every reduced basic topology is saturated;
\item $T^{RS}=T^R$, for every basic topology $T$;
\item $\JJ\AA(\J)=\J$, for every $\J\in RED(S)$;
\item $\JJ$ is surjective (every $\J$ is of the form $\JJ(\A)$ for some $\A$);
\item $\AA$ is injective ($\AA(\J)=\AA(\J')$ only if $\J=\J'$).
\end{enumerate}
\end{proposition}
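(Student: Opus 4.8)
The plan is to establish the first chain of five equivalences; the second follows by a completely dual argument (swapping the roles of $SAT(S)$ and $RED(S)$, of $\AA$ and $\JJ$, of $(\ )^S$ and $(\ )^R$), so I would only write the first in detail and then say ``dually'' for the second. The backbone is to prove a cycle of implications, say $(1)\Rightarrow(2)\Rightarrow(3)\Rightarrow(4)\Rightarrow(1)$, and separately the equivalence $(3)\Leftrightarrow(5)$ (or fold $(5)$ into the cycle). Most of the implications are essentially unfoldings of definitions together with the general properties of the Galois connection collected in Corollary~\ref{corollary properties Galois}.

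First I would note $(2)\Leftrightarrow(3)$: by definition $T^{SR}=[\AA\JJ(\A),\JJ(\A)]$ and $T^S=[\A,\JJ(\A)]$, and two basic topologies with the same reduction are equal iff their saturations agree; since $\JJ\AA\JJ=\JJ$ always holds, $T^{SR}=T^S$ for every $T$ is literally the statement $\AA\JJ(\A)=\A$ for every $\A$, because every $\A$ occurs as the saturation component of some basic topology (e.g.\ of $[\A,\JJ(\A)]$, which is a basic topology by the very definition of $\JJ(\A)$). For $(1)\Leftrightarrow(2)$: a saturated $T=[\A,\JJ(\A)]$ is reduced iff $T=T^R$, i.e.\ iff $\A=\AA(\JJ(\A))$; and $T^S=[\A,\JJ(\A)]$ again, while $(T^S)^R=T^{SR}$, so ``$T^S$ is reduced for all $T$'' unfolds to the same identity in $(3)$. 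For $(3)\Leftrightarrow(4)$: if $\AA\JJ(\A)=\A$ for all $\A$, then every $\A$ equals $\AA(\J)$ with $\J=\JJ(\A)$, so $\AA$ is surjective; conversely if $\A=\AA(\J)$ for some $\J$, then applying item~3 of Corollary~\ref{corollary properties Galois} ($\AA\JJ\AA=\AA$) gives $\AA\JJ(\A)=\AA\JJ\AA(\J)=\AA(\J)=\A$, as observed right after that corollary. Finally $(3)\Leftrightarrow(5)$: $\JJ$ injective plus $\JJ\AA\JJ=\JJ$ forces $\AA\JJ(\A)=\A$ by applying injectivity to $\JJ(\AA\JJ(\A))=\JJ(\A)$; conversely, if $\AA\JJ=\mathrm{id}$ on $SAT(S)$ then $\JJ$ has a left inverse and is injective.

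I do not expect a serious obstacle here: every step is a one- or two-line manipulation using only Corollary~\ref{corollary properties Galois}, Lemma~\ref{lemma A1subA2 J1subJ2} (to compare basic topologies component-wise), and the definitions of $(\ )^R$, $(\ )^S$ from Section~\ref{section basic topologies}. The one point that needs a little care is making sure the quantifier ``for every basic topology $T$'' in $(2)$ is correctly matched with ``for every $\A\in SAT(S)$'' in $(3)$: the key remark is that the saturation components of basic topologies are exactly all of $SAT(S)$, since $[\A,\JJ(\A)]$ is always a basic topology, so quantifying over $T$ and then extracting $U_S(T)$ is the same as quantifying over $\A$. Once that is said once, the rest is bookkeeping, and I would simply present the cycle $(1)\Rightarrow(2)\Rightarrow(3)\Rightarrow(4)\Rightarrow(1)$ plus $(3)\Leftrightarrow(5)$, then invoke duality for the second list.
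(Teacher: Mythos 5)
Your proposal is correct and follows essentially the same route as the paper: the equivalences $(1)\Leftrightarrow(2)\Leftrightarrow(3)$ by unfolding the definitions of saturated/reduced and of $(\ )^S$, $(\ )^R$, and $(3)\Leftrightarrow(4)\Leftrightarrow(5)$ from the general Galois-connection facts in Corollary~\ref{corollary properties Galois} (in particular $\AA\JJ\AA=\AA$ and $\JJ\AA\JJ=\JJ$), with the second list obtained by duality. Your explicit remark that every $\A\in SAT(S)$ occurs as the saturation component of some basic topology (e.g.\ $[\A,\JJ(\A)]$), so that the quantifiers in $(2)$ and $(3)$ match, is a point the paper leaves implicit but is handled correctly.
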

\begin{proof}
We prove only the first half of the statement, since the other half
is dual. (1$\Leftrightarrow$2$\Leftrightarrow$3): a basic topology
is saturated iff it is of the form $T^S$ $=$ $[\A,\JJ(\A)]$ for some
$T$; so 1 holds iff every $T^S$ is reduced iff every $T^S$
coincides with its reduction $T^{SR}$, that is 2, which means that
every $[\A,\JJ(\A)]$ coincides with $[\AA\JJ(\A),\JJ(\A)]$, which is
equivalent to 3. (3$\Leftrightarrow$4$\Leftrightarrow$5): this holds
for every Galois connection, since it follows from corollary
\ref{corollary properties Galois}.
\end{proof}

\subsection{Not every closure is determined by an interior}

We show that not every $\A$ is of the form $\AA(\J)$ for some $\J$, that is item 4 
of the first part of proposition~\ref{prop. equivalenti controesempi}.
We actually give a counterexample for its equivalent formulation in item 3.

\begin{lemma}\label{density gives Pos}
For every set $S$ and every $\J\in RED(S)$, the following holds
\begin{equation}\label{eq positivity axiom}
\big(a\in\J S\Rightarrow a\in\AA(\J)U\big)\Longrightarrow a\in\AA(\J)U
\end{equation}
 for all $a\in S$ and $U\sub U$.
\end{lemma}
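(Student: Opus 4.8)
The plan is to unfold the definition of $\AA(\J)$ given by equation~(\ref{eq def A(J)}) and argue directly. Recall $a\in\AA(\J)U$ means $(\forall V\sub S)(a\in\J V\Rightarrow U\overlap\J V)$. So I must show that the hypothesis
$$\big(a\in\J S\Rightarrow(\forall V\sub S)(a\in\J V\Rightarrow U\overlap\J V)\big)$$
implies $(\forall V\sub S)(a\in\J V\Rightarrow U\overlap\J V)$. First I would fix an arbitrary $V\sub S$ and assume $a\in\J V$. The key observation is that $\J$ is contractive and monotone, so $\J V\sub V\sub S$ gives $\J V\sub\J S$ by monotonicity; hence from $a\in\J V$ we get $a\in\J S$. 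Now the antecedent of the outer implication in the hypothesis is discharged, so we obtain $(\forall V'\sub S)(a\in\J V'\Rightarrow U\overlap\J V')$; instantiating this at $V'=V$ together with our assumption $a\in\J V$ yields $U\overlap\J V$, which is exactly what was needed.

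So the proof reduces to the single elementary fact that $a\in\J V$ entails $a\in\J S$ for any reduction $\J$, which is just monotonicity applied to $V\sub S$ (contractivity is not even needed, only that every subset is contained in $S$). Once the antecedent $a\in\J S$ of the hypothesis is seen to be derivable from $a\in\J V$, the rest is pure logic: we are in the situation $(P\Rightarrow Q)\Rightarrow Q$ where $P$ has been shown to follow from the local assumption $a\in\J V$ that we also need to derive $Q$ from $P$. There is no real obstacle here; the only thing to be careful about is the order of the quantifier and implication in the unfolded form of $\AA(\J)U$, making sure we introduce $V$ and the assumption $a\in\J V$ before trying to invoke the hypothesis. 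I would write it as follows.

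\begin{proof}
Recall from (\ref{eq def A(J)}) that $a\in\AA(\J)U$ holds if and only if $a\in\J V\Rightarrow U\overlap\J V$ for all $V\sub S$. So assume
$$\big(a\in\J S\Rightarrow a\in\AA(\J)U\big)$$
and let us prove $a\in\AA(\J)U$. To this aim, fix $V\sub S$ and assume $a\in\J V$; we must show $U\overlap\J V$. Since $V\sub S$ and $\J$ is monotone, $\J V\sub\J S$, hence $a\in\J S$. By the assumption, $a\in\AA(\J)U$; applying (\ref{eq def A(J)}) to $V$ together with $a\in\J V$ yields $U\overlap\J V$, as wished.
\end{proof}

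I expect the surrounding discussion in the paper to read this lemma as saying that, for saturations of the form $\AA(\J)$, the predicate $a\in\AA(\J)U$ satisfies a ``positivity''-style closure condition with respect to $\J S$; the counterexample to item~3 of proposition~\ref{prop. equivalenti controesempi} will then be obtained by exhibiting a saturation $\A$ for which (\ref{eq positivity axiom}) fails, so that $\A$ cannot be $\AA(\J)$ for any $\J$. But that is the job of the next results, not of this lemma, whose proof is the short monotonicity argument above.
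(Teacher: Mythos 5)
Your proof is correct and follows essentially the same route as the paper's: fix $V$, assume $a\in\J V$, deduce $a\in\J S$ (the paper says ``immediately,'' which is exactly your monotonicity observation from $V\sub S$), then discharge the hypothesis and instantiate the definition of $\AA(\J)$ at $V$. Nothing to add.
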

\begin{proof}
Assume $a\in\J S$ $\Rightarrow$
$a\in\AA(\J) U$. The claim is $a\in\J V$ $\Rightarrow$ $U\overlap\J
V$ for all $V\sub S$. 
So let $a\in\J V$. Then we immediately have $a\in\J S$ and hence $a\in\AA(\J)U$ by the 
assumption. By definition of $\AA(\J)$ we obtain that $a\in\J V$
$\Rightarrow$ $U\overlap\J V$; hence the claim $U\overlap\J V$
because $a\in\J V$.
\end{proof}

\begin {proposition}\label{AJ identity = LEM}
The fact  that $\AA\JJ$ is the identity on $SAT(S)$ for every $S$ is equivalent to the law of excluded middle.
\end{proposition}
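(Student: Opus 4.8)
The plan is to prove the equivalence by establishing both implications directly. For the easy direction, I assume the law of excluded middle and show $\AA\JJ(\A) = \A$ for every $\A\in SAT(S)$. By remark~\ref{A(J) and J(A) classically}, classically $\JJ(\A) = -\A-$ and $\AA(\J) = -\J-$, so $\AA\JJ(\A) = -(-\A-)- = --\A--$. With excluded middle, $-- $ is the identity operator on subsets, so $\AA\JJ(\A) = \A$ as desired. (Alternatively, one may simply invoke that classically the Galois connection collapses, as observed after corollary~\ref{corollary properties Galois}.)

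For the converse, I assume $\AA\JJ(\A) = \A$ for every $S$ and every $\A\in SAT(S)$, and aim to derive an arbitrary instance of excluded middle, say $\phi \vee \neg\phi$ for a fixed proposition $\phi$. The idea is to choose a small set and a carefully chosen saturation whose fixed points encode $\phi$, and then exploit lemma~\ref{density gives Pos}. The natural candidate is a two-element (or one-element) set together with a reduction $\J$ for which $\J S$ is the truth value of $\phi$, i.e. $a\in\J S \Leftrightarrow \phi$. On a singleton $S = \{a\}$, take $\J$ to be the reduction with $\J S = \{x\in S \mid \phi\}$ and $\J\emptyset = \emptyset$; this is idempotent, monotone and contractive, hence a genuine reduction. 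Then $\AA(\J)$ is a saturation, and by lemma~\ref{density gives Pos} applied with $U = \emptyset$ we get $(a\in\J S \Rightarrow a\in\AA(\J)\emptyset) \Rightarrow a\in\AA(\J)\emptyset$, i.e. $(\phi \Rightarrow a\in\AA(\J)\emptyset)\Rightarrow a\in\AA(\J)\emptyset$. Now the hypothesis $\AA\JJ(\A) = \A$ forces, for $\A = \AA(\J)$, that $\A$ is in the image of $\AA$, and one computes $\JJ(\AA(\J))$ and then $\AA\JJ\AA(\J) = \AA(\J)$ automatically by corollary~\ref{corollary properties Galois}(3) — so the content of the hypothesis must be extracted differently: it says that \emph{every} saturation, in particular any one we build whose value at $\emptyset$ is governed by $\phi$, satisfies the positivity-like collapse. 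The cleaner route is to build $\A$ directly so that $a\in\A\emptyset \Leftrightarrow \neg\neg\phi$ (e.g. $\A = --\circ(\text{something})$, or $\A_\P$ for $\P$ containing $\{x \mid \phi\}$), check $\A\in SAT(S)$, verify that $\JJ(\A) = \bot$ when $S = \{a\}$ because the only splitting subset contained in a proper subset is $\emptyset$ unless $\phi$ holds, whence $\AA\JJ(\A) = \AA(\bot) = \top$ by (\ref{eq. AA casi banali}); then $\AA\JJ(\A) = \A$ gives $\top = \A$, so $a\in\A\emptyset$, i.e. $\neg\neg\phi$ — which is not yet $\phi\vee\neg\phi$, so the singleton is too coarse.

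The fix is to use a \emph{two-element} set $S = \{a,b\}$ (or the set $S = \{x \in \{0\} \mid \phi\} \cup \{1\}$, a "Sierpiński-like" set whose cardinality encodes $\phi$). With $S$ having an element $b$ that is "present only if $\phi$", one arranges a saturation $\A$ whose fixed points are exactly those subsets that are "$\phi$-decidable", and the splitting subsets of $\A$ then detect $\phi$: $\{a\}$ splits $\A$ iff $\phi\vee\neg\phi$ holds, roughly. Concretely, I would set $\A = \A_\P$ with $\P = \{\emptyset, \{x\in S\mid \phi\}, S\}$ or similar, compute that $\JJ(\A)$ sends $\{a\}$ to $\{a\}$ precisely when $\phi$ is decidable, and conclude that $\AA\JJ(\A) = \A$ fails unless $\phi\vee\neg\phi$. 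The key lever throughout is lemma~\ref{density gives Pos}: it says $\AA(\J)U$ always satisfies the "positivity" axiom $(a\in\J S \Rightarrow a\in\AA(\J)U)\Rightarrow a\in\AA(\J)U$, so any $\A$ in the image of $\AA\JJ$ inherits this, and one picks $\A$, $U$, $a$ so that this positivity statement unwinds to an instance of $\neg\neg\phi\to\phi$ or directly to $\phi\vee\neg\phi$.

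The main obstacle I anticipate is getting the counterexample set and saturation \emph{just} delicate enough: the singleton only yields $\neg\neg\phi$ (double-negation elimination, which is weaker than full LEM in some settings but actually equivalent over the relevant base — still, the paper presumably wants a clean LEM statement), so the construction must genuinely use a set whose extent depends on $\phi$, and one must carefully verify monotonicity and idempotence of the chosen $\A$, identify all its splitting subsets constructively, and check that the failure of $\AA\JJ(\A) = \A$ is \emph{equivalent} to (not merely implied by) the negation of a decidability statement — so that the equivalence in the proposition is tight in both directions. Verifying "$Z$ splits $\A$" for the candidate subsets is the computational heart and the step most likely to need care.
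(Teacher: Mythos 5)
Your easy direction is fine, and for the converse you have correctly located the two key ingredients: lemma~\ref{density gives Pos} and a saturation on a tiny set whose behaviour encodes an arbitrary proposition $p$. But the proof is not completed, and the reason you abandon the singleton is a misdiagnosis. Deriving $\neg\neg p\to p$ \emph{for every proposition} $p$ is already equivalent to the law of excluded middle: apply it to $p\vee\neg p$, whose double negation is an intuitionistic theorem. So the singleton is not ``too coarse''; the paper's proof lives entirely on $S=\{*\}$. It takes $\A U = U\cup\{x\in S\mid p\}$, observes that $*\in\A\emptyset$ is equivalent to $p$ and that $*\in\JJ(\A)S$ (i.e.\ ``$\{*\}$ splits $\A$'') unwinds to $\A U\sub U$ for all $U$, i.e.\ to $\neg p$; then the hypothesis $\A=\AA\JJ(\A)$ allows lemma~\ref{density gives Pos} to be applied to $\A$ itself with $U=\emptyset$, yielding $(\neg p\to p)\to p$, which is $\neg\neg p\to p$.

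Beyond that, your singleton computation contains a constructively illegitimate step: you assert $\JJ(\A)=\bot$ ``unless $\phi$ holds'', which is itself a case distinction on $\phi$. The paper never computes $\JJ(\A)$ as a whole; it only identifies the single proposition $*\in\JJ(\A)S$ and feeds it into the positivity lemma. Your two-element fallback is then left as an unverified sketch (``or similar'', ``roughly''), and the steps you yourself flag as the computational heart --- identifying the splitting subsets constructively and checking the equivalence --- are exactly the ones not carried out; note also that the two-element set with a reduction $\J_p$ is what the paper uses for the dual statement about $\JJ\AA$, not for this one. As it stands the proposal is a plan with a wrong turn rather than a proof; returning to the singleton with the saturation $\A U = U\cup\{x\in S\mid p\}$ closes the gap.
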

\begin{proof}
Fix an arbitrary proposition $p$, let $S=\{*\}$ (the one-element set) and put $\A U\ =\ U\cup\{x\in S\ |\ p\}$ for every $U\sub S$.  
This obviously defines a saturation. We claim that the assumption $\A=\AA\JJ(\A)$ yields $\neg\neg p\rightarrow p$. 
Assuming $\A=\AA\JJ(\A)$,  the previous lemma would give in particular
  $(*\in\JJ(\A) S\Rightarrow *\in\A\emptyset)$ $\Rightarrow$ $*\in\A\emptyset$ .  
  By definition, $*\in\A\emptyset$ is equivalent to $p$.
On the other hand,  $*\in\JJ(\A)S$ is $\exists\,Z(*\in Z \sub S\land Z\textrm{ splits }\A)$, that is, $\{*\}$ splits $\A$. 
This means that $*\in\A U\Rightarrow *\in U$ for all $U\sub S$; in other words, it says that $\A U\sub U$ for all $U$. By the definition of $\A$, this is equivalent to $\{x\in S\ |\ p\}\sub U$ for all $U$ and hence to $\{x\in S\ |\ p\}\sub\emptyset$, that is, $\neg p$. So $(*\in\JJ(\A) S\Rightarrow *\in\A\emptyset)\Rightarrow *\in\A\emptyset$ is tantamount to $(\neg p\rightarrow p)\rightarrow p$ which is in turn equivalent to $\neg\neg p\rightarrow p$, since $(\neg p \rightarrow p) \leftrightarrow \neg\neg p$.
\end{proof}

An alternative argument to show that not every $\A$ is of the form $\AA(\J)$
uses a result in  \cite{CSSS}.
There the authors construct a class of saturations  and
show (corollary 3) that Markov's principle follows from the hypothesis that all
such saturations admit a \emph{positivity predicate}.\footnote{This is linked to  the
well-known fact that intuitionistically not all locales are open.} 
If every $\A$ were of the form $\AA(\J)$ for some $\J$, then by  lemma~\ref{density gives Pos} we would obtain
  an expression, in the present framework, of the fact that $\A$ admits a positivity predicate.

\subsection{Not every interior is determined by a closure}


We show that not every $\J$ is of the form $\JJ(\A)$ for some $\A$  
by giving a counterexample for its equivalent formulation $\J=\JJ\AA(\J)$.

Recall that $\J=\JJ\AA(\J)$ holds iff $\JJ\AA(\J)\sub\J$ iff
$Fix(\JJ\AA(\J))\sub Fix(\J)$ iff, by the remark after
proposition~\ref{caratterizzazione di J(A)},
 $\{Z\sub S$ $|$ $Z$ splits $\AA(\J)\}\sub Fix(\J)$. Therefore, to show that the identity $\J=\JJ\AA(\J)$ cannot hold
intuitionistically for all $\J$, it is sufficient to find a set $S$, a reduction
$\J$ on $S$ and a subset $Z\sub S$ which splits $\AA(\J)$ and such
that the equality $Z=\J Z$ cannot hold intuitionistically. In fact, we will show that  if the identity under consideration were true, then the law of excluded middle would hold.

Let $S=\{a,b\}$ with $a\neq b$. For every proposition $p$, we
consider the map $\J_p:Pow(S)\rightarrow Pow(S)$ defined by: \begin{equation}\label{eq def
J_p}x\in\J_pU\ \stackrel{def}{\Longleftrightarrow}\ x\in U\ \land\
\big(b\notin U\Rightarrow p\big)\end{equation} for every
$U\sub S$ and $x\in S$.
In particular, by intuitionistic logic, one gets $b\in\J_pU$ iff $b\in U$.

\begin{lemma}
For every proposition $p$, the map $\J_p$ is a reduction on $\{a,b\}$.
\end{lemma}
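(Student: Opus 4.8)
The plan is to verify the three defining properties of a reduction for $\J_p$: monotonicity, idempotency, and contractivity. Contractivity is immediate from the definition, since $x \in \J_p U$ requires $x \in U$ as its first conjunct, so $\J_p U \sub U$ for all $U$.

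\begin{proof}
Contractivity ($\J_p\sub id$) is immediate: by (\ref{eq def J_p}), $x\in\J_p U$ implies $x\in U$.

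Monotonicity: suppose $U\sub V$ and $x\in\J_p U$, so $x\in U$ and $b\notin U\Rightarrow p$. Then $x\in V$ since $U\sub V$. Moreover $b\notin V\Rightarrow b\notin U\Rightarrow p$ (using $U\sub V$ for the first implication). Hence $x\in\J_p V$.

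Idempotency: since $\J_p$ is contractive and monotone, we have $\J_p\J_p\sub\J_p$ automatically (apply monotonicity to $\J_p U\sub U$). For the reverse inclusion $\J_p\sub\J_p\J_p$, assume $x\in\J_p U$, that is, $x\in U$ and $b\notin U\Rightarrow p$. We must show $x\in\J_p(\J_p U)$, i.e. $x\in\J_p U$ (which we have) and $b\notin\J_p U\Rightarrow p$. Recall the remark after (\ref{eq def J_p}): $b\in\J_p U$ iff $b\in U$. Hence $b\notin\J_p U$ is equivalent to $b\notin U$, and from $b\notin U$ we get $p$ by hypothesis. Therefore $x\in\J_p\J_p U$, and so $\J_p\J_p=\J_p$.

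Thus $\J_p$ is monotone, idempotent and contractive, i.e.\ a reduction on $\{a,b\}$.
\end{proof}

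The only step requiring a moment's care is idempotency, and the key observation that makes it routine is the already-noted equivalence $b\in\J_p U\Leftrightarrow b\in U$ (a consequence of intuitionistic logic: $b\in U\land(b\notin U\Rightarrow p)$ is equivalent to $b\in U$). This lets one replace $b\notin\J_p U$ by $b\notin U$ inside the nested application, after which the side condition collapses to the one already assumed. No impredicativity or excluded middle is needed anywhere; everything is a direct intuitionistic verification.
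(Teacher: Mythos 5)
Your proof is correct and every step is intuitionistically valid; the essential point in both your argument and the paper's is the observation (already recorded right after the definition of $\J_p$) that $b\in\J_p U$ iff $b\in U$, which by ex falso makes the side condition $b\notin U\Rightarrow p$ vacuous at $b$. The difference is one of decomposition: you verify the three clauses of the definition of a reduction separately (contractivity, monotonicity, idempotency), whereas the paper verifies contractivity together with the single condition that $\J_p U\sub V$ implies $\J_p U\sub\J_p V$, which for a contractive operator is a standard equivalent packaging of monotonicity and idempotency (take $V\supseteq U$ for the former and $V=\J_p U$ for the latter). The paper's route does the $b$-bookkeeping once instead of twice and is marginally shorter; yours is more elementary in that it checks the definition as literally stated and needs no appeal to the equivalence of the two characterizations. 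Both are fine; no gap.
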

\begin{proof}
The inclusion $\J_p U\sub U$ holds trivially for all $U\sub S$.
Given this, it is sufficient to check that $\J_p U\sub V$ implies
$\J_p U\sub\J_p V$ for all $U,V\sub S$. So we assume $x\in\J_p U\sub
V$, for $x\in S$, and we show $x\in\J_p V$, that is, $x\in V$ and 
$b\notin V\Rightarrow p$. The former follows easily from 
$x\in\J_p U\sub V$. To prove the latter, first note that
$b\notin V$ implies $b\notin U$. In fact, if it were $b\in U$, then it would also be $b\in\J_p U$ 
and hence $b\in V$ by the hypothesis $\J_pU\sub
V$; a contradiction. Therefore $b\notin U\Rightarrow p$ yields
$b\notin V\Rightarrow p$. But $b\notin U\Rightarrow p$ is part
of the hypothesis $x\in\J_pU$. This completes the proof.
\end{proof}

Now we choose the subset $Z = \{a\}$ and show that $Z$ splits
$\AA(\J_p)$ but cannot be proven to equal $\J_pZ$.  
\begin{lemma}
$\{a\}=\J_p\{a\}$ holds if and only if $p$ is true.
\end{lemma}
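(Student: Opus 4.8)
The plan is to unfold the definition of $\J_p\{a\}$ using equation (\ref{eq def J_p}) and see directly what $\{a\}=\J_p\{a\}$ amounts to. Since $\{a\}\subseteq S$ trivially, and $a\in\J_p\{a\}$ means $a\in\{a\}$ (true) together with $b\notin\{a\}\Rightarrow p$; but $b\notin\{a\}$ is true because $a\neq b$, so $a\in\J_p\{a\}$ is equivalent to $p$. On the other hand, $b\in\J_p\{a\}$ is equivalent to $b\in\{a\}$, which is false. So $\J_p\{a\}=\{a\}$ forces $a\in\J_p\{a\}$ (since $a\in\{a\}$), hence $p$; conversely if $p$ holds then $a\in\J_p\{a\}$ and $\J_p\{a\}\subseteq\{a\}$ gives equality.

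First I would record that $a\in\J_p\{a\}\Leftrightarrow p$: the conjunct $a\in\{a\}$ is trivially true, and the conjunct $b\notin\{a\}\Rightarrow p$ reduces to $p$ because $b\notin\{a\}$ holds (as $a\neq b$). Then I would note that $\J_p\{a\}\subseteq\{a\}$ always (reductions are contractive, or directly from $\J_pU\subseteq U$), so $\{a\}=\J_p\{a\}$ is equivalent to the reverse inclusion $\{a\}\subseteq\J_p\{a\}$, i.e.\ to $a\in\J_p\{a\}$, i.e.\ to $p$.

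There is essentially no obstacle here; the only thing to be careful about is keeping the intuitionistic reading straight, namely that $b\notin\{a\}$ is genuinely provable (it is $\neg(b=a)$, which follows from the assumption $a\neq b$), so that no excluded middle sneaks in at this step. The whole content of the lemma is this elementary unfolding; the real work — deriving excluded middle — will come afterwards, when one combines this lemma with the fact (to be proved next) that $\{a\}$ splits $\AA(\J_p)$, so that $\J=\JJ\AA(\J)$ for all $\J$ would force $\{a\}\in Fix(\J_p)$, i.e.\ $p$, for every proposition $p$.
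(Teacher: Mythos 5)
Your proof is correct and follows exactly the same route as the paper's: reduce the equality $\{a\}=\J_p\{a\}$ to the inclusion $\{a\}\sub\J_p\{a\}$ (since $\J_p$ is contractive), hence to $a\in\J_p\{a\}$, and then unfold the definition, using $a\neq b$ to see that $b\notin\{a\}$ holds so the conjunct $b\notin\{a\}\Rightarrow p$ collapses to $p$. Your remark that no excluded middle is needed at this step is a nice explicit check, but the argument is otherwise identical to the paper's.
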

\begin{proof}
$\{a\}=\J_p\{a\}$ iff $\{a\}\sub\J_p\{a\}$ iff $a\in\J_p\{a\}$
which, by definition, means
$a\in\{a\}\land (b\notin\{a\}\Rightarrow p)$ which  is
equivalent to $p$ since $b\neq a$.
\end{proof}

>From now on we restrict $p$ to be a proposition such that $\neg\neg p$ holds. For example, one can choose $p$ of the form $\varphi\vee\neg\varphi$. 

\begin{lemma}
 The subset $\{a\}$ splits $\AA(\J_p)$ for every $p$ such that $\neg\neg p$ holds.
\end{lemma}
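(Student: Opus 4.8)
The plan is to unwind the definitions and reduce the statement to a single propositional implication, which is then verified by evaluating the characterisation of $\AA(\J_p)$ on two carefully chosen test subsets. First, since $\{a\}$ is a singleton, $W\overlap\{a\}$ is just $a\in W$; hence, via definition~\ref{def. splitting subset}, ``$\{a\}$ splits $\AA(\J_p)$'' unfolds to the assertion that $a\in\AA(\J_p)U$ implies $a\in U$, for every $U\sub S$. So I would fix $U\sub S$, assume $a\in\AA(\J_p)U$, and aim at $a\in U$. By the explicit description~(\ref{eq def A(J)}) of $\AA(\J_p)$, together with $\J_p V\sub V$, the assumption gives in particular that $a\in\J_p V\Rightarrow U\overlap V$ for every $V\sub S$.

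Now I would feed in two subsets. Taking $V=\{a\}$: since $b\notin\{a\}$, the membership $a\in\J_p\{a\}$ reduces by~(\ref{eq def J_p}) to $p$, while $U\overlap\{a\}$ is $a\in U$; so the assumption yields $p\Rightarrow a\in U$, whence, using the standing hypothesis $\neg\neg p$, also $\neg\neg(a\in U)$. Taking instead the subset $V=\{a\}\cup\{x\in S\mid a\in U\}$ — i.e.\ the $V$ with $a\in V$ always and $b\in V\Leftrightarrow a\in U$ — one computes $a\in\J_p V\Leftrightarrow(\neg(a\in U)\Rightarrow p)$, while $U\overlap V\Leftrightarrow a\in U$ (in the case witnessed by $b$ one uses $b\in V\Leftrightarrow a\in U$). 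Hence the assumption now yields $(\neg(a\in U)\Rightarrow p)\Rightarrow a\in U$.

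To finish, I would combine the two conclusions: from $\neg\neg(a\in U)$ one gets $\neg(a\in U)\Rightarrow p$ by \emph{ex falso} (an assumption $\neg(a\in U)$ contradicts $\neg\neg(a\in U)$), and plugging this into the second implication delivers $a\in U$. The only genuinely non-routine step is the choice of the second test subset: it must be arranged so that $a\in\J_p V$ becomes exactly the antecedent $\neg(a\in U)\Rightarrow p$ needed to trigger the implication, while $U\overlap V$ carries no information beyond $a\in U$; tying the membership of $b$ in $V$ to the target proposition $a\in U$ is what makes both happen at once. Everything else is routine unwinding of the definitions of $\J_p$, $\AA$, $\overlap$ and ``splits'', plus elementary intuitionistic manipulation around $\neg\neg p$.
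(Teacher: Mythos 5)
Your proof is correct and follows essentially the same route as the paper: the decisive step in both is specialising the characterisation (\ref{eq def A(J)}) of $\AA(\J_p)$ to the test subset $\{a\}\cup\{x\in S\mid a\in U\}$ to obtain $(a\notin U\Rightarrow p)\Rightarrow a\in U$, and then using $\neg\neg p$ together with \emph{ex falso} to discharge the antecedent. The only immaterial difference is that you obtain $\neg\neg(a\in U)$ from a separate instance $V=\{a\}$, whereas the paper derives it from the implication $(a\notin U\Rightarrow p)\Rightarrow a\in U$ itself combined with $\neg\neg p$.
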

\begin{proof}
Recall that $\{a\}$ splits $\AA(\J_p)$ if
$(\forall\,U\sub S)(\AA(\J_p)U\overlap\{a\}$ $\Rightarrow$ $U\overlap
\{a\})$, that is, $(\forall\,U\sub S)(a\in\AA(\J_p)U\Rightarrow a\in
U)$. Let $U\sub S$ and $a\in\AA(\J_p)U$; our claim is $a\in U$. 
By (\ref{eq def A(J)}), $a\in\AA(\J_p)U$ means that $a\in\J_p V$
$\Rightarrow$ $U\overlap\J_pV$ for all $V\sub S$. By specializing to the
case $V=\{a\}\cup V_b$ where $V_b\stackrel{def}{=}\{x\in\{b\}\ |\
a\in U\}$, we get:
\begin{equation}\label{eq finestra}
a\in\J_p(\{a\}\cup V_b)\quad\Longrightarrow\quad U\overlap
\J_p(\{a\}\cup V_b)\,.
\end{equation} 
Note that $b\notin \{a\}\cup V_b\Rightarrow p$ iff 
$b\notin V_b\Rightarrow p$ iff $a\notin U\Rightarrow p$.
Therefore the antecedent of (\ref{eq finestra}), that is 
$a\in\{a\}\cup V_b$ $\land$ $(b\notin \{a\}\cup V_b\Rightarrow p)$,  
is equivalent to $a\notin U\Rightarrow p$. 
On the other hand, the consequent of (\ref{eq finestra}) implies that 
$U\overlap\{a\}\cup V_b$ and so $U\overlap\{a\}$ or $U\overlap V_b$. 
In either case, one can derive $a\in U$. Therefore
(\ref{eq finestra}) yields
\begin{equation}\label{eq finestra bis}
(a\notin U\Rightarrow p)\Rightarrow a\in
U\,.
\end{equation} Recall that our aim is to prove $a\in U$. Assume $a\notin U$. Then (\ref{eq
finestra bis}) becomes $p\Rightarrow a\in U$. Together with $a\notin U$, this gives $\neg p$, thus contradicting the fact that
 $\neg\neg p$ holds. So $a\notin U$ must be false; hence the antecedent of (\ref{eq finestra bis}) becomes
true and we are done.
\end{proof}

By putting all lemmas together, we have

\begin{proposition}\label{JA identity = LEM}
The fact that $\JJ\AA$ is the identity on $RED(S)$ for every $S$ is equivalent to the law of excluded middle.
\end{proposition}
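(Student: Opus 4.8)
The plan is to assemble the lemmas just proved into the two directions of the equivalence. One direction is essentially free: if the law of excluded middle holds, then by remark~\ref{A(J) and J(A) classically} we have $\AA(\J)=-\J-$ and $\JJ(\A)=-\A-$, and classically passing to the companion is an involution, so $\JJ\AA(\J)=-(-\J-)-=\J$ for every reduction $\J$; thus $\JJ\AA$ is the identity on $RED(S)$ for every $S$. (Alternatively, one invokes item~3 of the dual half of proposition~\ref{prop. equivalenti controesempi} together with the classical collapse.) So the real content is the forward implication.

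For the forward implication I would proceed exactly as the section has been building up. Fix a proposition $p$ with $\neg\neg p$; concretely one may take $p = \varphi\vee\neg\varphi$ for an arbitrary proposition $\varphi$, so that deriving $p$ yields $\varphi\vee\neg\varphi$ and hence excluded middle. Form $S=\{a,b\}$ with $a\neq b$ and the reduction $\J_p$ of equation~(\ref{eq def J_p}); the preceding lemma shows $\J_p\in RED(S)$. Assume $\JJ\AA$ is the identity on $RED(S)$, so in particular $\J_p=\JJ\AA(\J_p)$. By the remark after proposition~\ref{caratterizzazione di J(A)}, this means $Fix(\JJ\AA(\J_p)) = Fix(\J_p)$, i.e.\ every subset that splits $\AA(\J_p)$ is a fixed point of $\J_p$. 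The last lemma says $\{a\}$ splits $\AA(\J_p)$; hence $\{a\}\in Fix(\J_p)$, that is $\{a\}=\J_p\{a\}$. By the lemma identifying $\{a\}=\J_p\{a\}$ with the truth of $p$, we conclude $p$, hence $\varphi\vee\neg\varphi$. Since $\varphi$ was arbitrary, the law of excluded middle follows.

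The only step that requires any care is making sure the chain ``$\JJ\AA(\J_p)=\J_p$ $\Rightarrow$ $\{a\}=\J_p\{a\}$'' is spelled out correctly: one should note that $Fix(\JJ(\A)) = \{Z \mid Z \textrm{ splits } \A\}$ (the remark after proposition~\ref{caratterizzazione di J(A)}) applied with $\A = \AA(\J_p)$, combined with lemma~\ref{lemma A1subA2 J1subJ2}(2) which reduces the equality of reductions to the inclusion of fixed-point sets. Everything else is quotation of the lemmas just established. I expect no genuine obstacle here — the hard work was front-loaded into the construction of $\J_p$ and the verification that $\{a\}$ splits $\AA(\J_p)$ under $\neg\neg p$, which is the step that truly uses the double-negation hypothesis (without it, $\{a\}$ would not split $\AA(\J_p)$ and the argument would collapse, consistently with the fact that the statement is equivalent to LEM rather than provable).

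\begin{proof}
If the law of excluded middle holds, then by remark~\ref{A(J) and J(A) classically} one has $\AA(\J)=-\J-$ and $\JJ(\A)=-\A-$ for all $\A$ and $\J$, so $\JJ\AA(\J)=-(-\J-)-=\J$ for every reduction $\J$ and every $S$; hence $\JJ\AA$ is the identity on $RED(S)$.

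Conversely, assume $\JJ\AA$ is the identity on $RED(S)$ for every set $S$. Fix an arbitrary proposition $\varphi$ and put $p=\varphi\vee\neg\varphi$, so that $\neg\neg p$ holds. Let $S=\{a,b\}$ with $a\neq b$ and let $\J_p$ be the reduction of equation~(\ref{eq def J_p}). By hypothesis $\J_p=\JJ\AA(\J_p)$, hence $Fix(\J_p)=Fix(\JJ\AA(\J_p))$ by lemma~\ref{lemma A1subA2 J1subJ2}. By the remark after proposition~\ref{caratterizzazione di J(A)}, $Fix(\JJ\AA(\J_p))$ consists precisely of the subsets that split $\AA(\J_p)$. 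Since $\{a\}$ splits $\AA(\J_p)$ by the last lemma, we get $\{a\}\in Fix(\J_p)$, that is $\{a\}=\J_p\{a\}$. By the lemma identifying this equality with the truth of $p$, we conclude that $p$ holds, that is $\varphi\vee\neg\varphi$. As $\varphi$ was arbitrary, the law of excluded middle follows.
\end{proof}
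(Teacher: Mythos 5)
Your proof is correct and takes essentially the same route as the paper: the same counterexample $\J_p$ on $S=\{a,b\}$, the same chain from ``$\{a\}$ splits $\AA(\J_p)$'' through the fixed-point characterization of $\JJ$ to $\{a\}=\J_p\{a\}$ and hence $p$. The only difference is that you spell out the easy direction (excluded middle implies $\JJ\AA=id$) explicitly, which the paper leaves implicit in remark~\ref{A(J) and J(A) classically}.
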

\begin{proof}
Assume $\neg\neg p$. Chose $S=\{a,b\}$ and construct $\J_p$ as above. Then $\{a\}$ splits $\AA(\J_p)$, that is, $\{a\}\in Fix(\JJ\AA(\J_p))$. If $\JJ\AA$ were the identity, then $\{a\}=\J_p\{a\}$. So $p$ would be true.
\end{proof}

In \cite{grayson}, Grayson  shows that there exists a model
for intuitionistic analysis in which
 real numbers can be equipped with two different topologies, hence two different reductions, $\J_1$ and $\J_2$
say, which are
associated with the same closure operator (saturation).
Since the notion of closure associated with $\J$ in \cite{grayson} is
precisely our $\AA(\J)$,  one thus has
$\AA(\J_1)$ $=$ $\AA(\J_2)$, even if $\J_1 = \J_2$ does not hold. So $\AA$ cannot be proven to be
injective.

A third argument makes use of two  notions of sobriety for a topological space.
One can show (see  \cite{bp}) that if every reduced basic topology is
saturated, then the two notions  coincide, while this does not hold constructively (see \cite{aczel-fox,fourman}).

\begin{remark}
As a consequence of propositions~\ref{AJ identity = LEM} and \ref{JA identity = LEM}, 
all conditions in both parts of proposition~\ref{prop. equivalenti controesempi} are equivalent to the law of excluded middle, and hence they are also equivalent one another.
\end{remark}

\section{Generated and representable basic topologies}\label{section predicative}

In this final section we  show that the functors $\AA$ and $\JJ$ can be defined predicatively
on a wide class of reductions and saturations, respectively. Impredicatively, such classes coincide with the class of all reductions and of all saturations.

\subsection{Representable basic topologies}

For every binary relation $r$ between two sets $X$ and $S$, the operators $r: Pow(X) $ $ \rightarrow $ $ Pow(S)$ of \emph{direct image} and $r^-:
Pow(S)\rightarrow Pow(X)$ of \emph{inverse
image} are defined by $$
r D\stackrel{def}{=}\{a\in S\ |\
(\exists\,x\in D)(x\,r\,a)\}\ \textrm{ and }\ r^-
U\stackrel{def}{=}\{x\in X\ |\ (\exists\,a\in U)(x\,r\,a)\}
$$ 
for
all $D\sub X$ and $U\sub S$. Since $r$ and $r^-$ preserve unions,
they admit right adjoints given by
$$r^* U\stackrel{def}{=}\{x\in
X\ |\ r\{x\}\sub U\}\textrm{ and }r^{-*} D\stackrel{def}{=}\{a\in S\
|\ r^-\{a\}\sub D\}\ .$$ The fact that the operators $r$ and $r^-$
come from the same relation is expressed ``algebraically'' by:
\begin{equation}\label{eq. fund. symmetry}
r D\overlap U\quad\Longleftrightarrow\quad D\overlap
r^-U\end{equation} (for all $D\sub X$ and $U\sub S$). We shall refer
to this condition as $r\fusim r^-$, read ``$r$ and $r^-$ are
symmetric''.\footnote{This notion can be treated algebraically in
the framework of overlap algebras \cite{cisa,ci2,bp}. Classically, it corresponds to
the notion of conjugate functions between complete Boolean algebras
as studied by J\'onsson and Tarski \cite{tarski}.}

\begin{proposition}\label{prop. operators image}
For every binary relation $r$ between two sets $X$ and $S$, the
structure $(S,r^{-*}r^-,rr^*)$ is a basic topology.
\end{proposition}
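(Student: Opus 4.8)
The plan is to show that $\A := r^{-*}r^-$ is a saturation, that $\J := rr^*$ is a reduction, and that $\A \comp \J$. The first two are general adjunction facts; the compatibility will follow from the symmetry property $r \fusim r^-$ of equation (\ref{eq. fund. symmetry}).

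First I would recall that $r^- \dashv r^{-*}$ and $r^* \dashv r$ (as noted just before the statement, since $r$ and $r^-$ preserve unions). From $r^- \dashv r^{-*}$ it is standard that the composite $r^{-*}r^-$ is a closure operator on $Pow(S)$: it is monotone (composition of monotone maps), expansive (unit of the adjunction: $U \sub r^{-*}r^- U$), and idempotent (a monad on a poset is idempotent, equivalently $r^- r^{-*} r^- = r^-$ since $r^- \eta = \mathrm{id}$). Dually, from $r^* \dashv r$ the composite $rr^*$ is monotone, contractive ($rr^* U \sub U$ is the counit), and idempotent. So $\A \in SAT(S)$ and $\J \in RED(S)$.

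The heart of the matter is $\A \comp \J$, i.e.\ $\A U \overlap \J V \Rightarrow U \overlap \J V$ for all $U, V \sub S$. I would use the symmetry (\ref{eq. fund. symmetry}) together with the adjunctions to rewrite $\A U \overlap \J V$. Unfolding, $\A U \overlap \J V$ means $r^{-*}r^- U \overlap r r^* V$; applying (\ref{eq. fund. symmetry}) in the form $rD \overlap W \Leftrightarrow D \overlap r^- W$ with $D = r^* V$ and $W = r^{-*} r^- U$, this is equivalent to $r^* V \overlap r^- r^{-*} r^- U$. Now $r^- r^{-*} r^- = r^-$ by idempotency of the monad, so this becomes $r^* V \overlap r^- U$. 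Applying (\ref{eq. fund. symmetry}) again, this time as $rD \overlap U \Leftrightarrow D \overlap r^- U$ with $D = r^* V$, we get $r r^* V \overlap U$, i.e.\ $\J V \overlap U$, which is exactly the conclusion $U \overlap \J V$. (The argument is in fact an iff, so compatibility holds with no loss.)

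The only mild obstacle is bookkeeping with the two instances of the symmetry law and making sure the collapse $r^- r^{-*} r^- = r^-$ is invoked correctly — this is the triangular identity for the adjunction $r^- \dashv r^{-*}$, which holds as soon as $r^-$ has a right adjoint. No excluded middle or choice is needed: overlap $\overlap$ behaves well under these manipulations precisely because $r, r^-$ preserve unions and (\ref{eq. fund. symmetry}) is an intuitionistically valid equivalence. Thus $(S, r^{-*}r^-, rr^*)$ is a basic topology.
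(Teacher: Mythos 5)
Your proof is correct and follows essentially the same route as the paper: both derive the saturation and reduction properties from the adjunctions, and both establish compatibility by shuttling the overlap across $r\fusim r^-$ twice with the collapse of $r^-r^{-*}r^-$ to $r^-$ in between (the paper only needs the inclusion $r^-r^{-*}r^-\sub r^-$, i.e.\ contractivity of $r^-r^{-*}$, which suffices for the implication). The one slip is notational: the adjunction is $r\dashv r^*$, not $r^*\dashv r$ --- the direction you wrote would make $rr^*$ expansive rather than contractive, contradicting the counit $rr^*U\sub U$ that you correctly use.
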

\begin{proof} Since $r\dashv r^*$ and $r^-\dashv r^{-*}$, it follows
that $rr^*$ is a reduction and $r^{-*}r^-$ is a saturation. It
remains to be checked that $r^{-*}r^-\comp rr^*$. If $r^{-*}r^-
U\overlap rr^* V$, then $r^-r^{-*}r^- U\overlap r^* V$ (because
$r\fusim r^-$) and therefore $r^- U\overlap r^* V$ because $r^-r^{-*}$
is contractive; so $U\overlap rr^* V$ (again because $r\fusim r^-$).
\end{proof}

We call \emph{representable} a basic topology obtained in this way.
Also, we say that a reduction $\J$ is representable if it is of the
form $rr^*$ for some relation $r$. Similarly for a saturation.

\begin{proposition}\label{lemma representable is dense}
For every relation $r$ between two sets $X$ and $S$, the basic
topology $(S,r^{-*}r^-,rr^*)$ is reduced, that is:
$$\AA(rr^*)\quad=\quad r^{-*}r^-\ .$$
\end{proposition}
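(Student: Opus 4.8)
The plan is to show the two inclusions $\AA(rr^*) \sub r^{-*}r^-$ and $r^{-*}r^- \sub \AA(rr^*)$ separately, using the Galois connection and the symmetry condition $r \fusim r^-$. The inclusion $r^{-*}r^- \sub \AA(rr^*)$ is immediate: by Proposition~\ref{prop. operators image} the triple $(S,r^{-*}r^-,rr^*)$ is a basic topology, so $r^{-*}r^- \comp rr^*$, and then equation (\ref{eq. A-semi-Galois}) gives $r^{-*}r^- \sub \AA(rr^*)$ at once. So the whole content of the proposition is the reverse inclusion.

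For $\AA(rr^*) \sub r^{-*}r^-$, I would argue pointwise. Fix $a \in S$ and $U \sub S$ with $a \in \AA(rr^*)U$; the goal is $a \in r^{-*}r^- U$, i.e. $r^-\{a\} \sub r^- U$. So take $x \in r^-\{a\}$, that is $x \mathrel r a$, and aim to show $x \in r^- U$, i.e. $U \overlap r\{x\}$. The natural move is to feed a cleverly chosen $V$ into the defining condition (\ref{eq def A(J)}) of $\AA(rr^*)$, which reads: $a \in \JJ'V$-type hypotheses... more precisely $a \in (rr^*)V \Rightarrow U \overlap (rr^*)V$ for all $V$. I expect the right choice is $V = r\{x\}$ (or possibly $r^*r^-\{a\}$ — I would check which one makes $a$ land in $rr^*V$). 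With $V = r\{x\}$, one needs $a \in rr^*(r\{x\})$; since $\{x\} \sub r^*r\{x\}$ (unit of the adjunction $r \dashv r^*$) and $r$ is monotone, $r\{x\} \sub rr^*r\{x\}$, and as $x \mathrel r a$ we get $a \in r\{x\} \sub rr^*r\{x\} = rr^*V$. Then the defining condition yields $U \overlap rr^*V = rr^*r\{x\}$. Now $r^*r\{x\} = \{y \mid r\{y\}\sub r\{x\}\} \ni x$, but more usefully I can push the overlap through: $U \overlap rr^*r\{x\}$ means by $r \fusim r^-$ (in the form (\ref{eq. fund. symmetry}), reading $rD\overlap U \Leftrightarrow D\overlap r^-U$ with $D = r^*r\{x\}$) that $r^*r\{x\} \overlap r^-U$. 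Since $r^*r\{x\}$ need not be contained in $\{x\}$ this is not yet $x \in r^- U$; I would instead work with the opposite adjunction or choose $V$ differently.

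The cleaner route is probably to use the symmetry twice. Starting from $U \overlap rr^*r\{x\}$: apply (\ref{eq. fund. symmetry}) to get $r^*r\{x\} \overlap r^-U$, then note $r^*r\{x\}$ is contained in... no. Let me instead choose $V$ so that $rr^*V = r\{x\}$ itself — but $rr^*$ is contractive and idempotent, and $r\{x\} = rr^*(r\{x\})$ would require $r\{x\}$ to be a fixed point of $rr^*$, which it is, since $rr^*r = r$ (a standard adjunction identity: $r r^* r = r$). So in fact $rr^*r\{x\} = r\{x\}$, and the defining condition directly gives $U \overlap r\{x\}$, which by (\ref{eq. fund. symmetry}) (with $D=\{x\}$) is exactly $x \in r^- U$. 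That closes the argument. The main obstacle — really the only subtle point — is locating the correct witness $V$ to substitute into (\ref{eq def A(J)}); everything else is the bookkeeping of the adjunction identities $\{x\}\sub r^*r\{x\}$, $rr^*r = r$, and the symmetry equivalence (\ref{eq. fund. symmetry}), all of which are available from the material above.

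\begin{proof}
One inclusion is immediate: by Proposition~\ref{prop. operators image}, $(S,r^{-*}r^-,rr^*)$ is a basic topology, so $r^{-*}r^-\comp rr^*$ and hence $r^{-*}r^-\sub\AA(rr^*)$ by (\ref{eq. A-semi-Galois}).

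For the converse $\AA(rr^*)\sub r^{-*}r^-$, fix $a\in S$ and $U\sub S$ with $a\in\AA(rr^*)U$. We must show $a\in r^{-*}r^- U$, that is, $r^-\{a\}\sub r^- U$. Let $x\in r^-\{a\}$, i.e. $x\mathrel r a$; we show $x\in r^- U$. Apply the defining condition (\ref{eq def A(J)}) of $\AA(rr^*)$ with $V=r\{x\}$. Since $rr^*r=r$ (a standard identity for the adjunction $r\dashv r^*$), we have $rr^*V=rr^*r\{x\}=r\{x\}$; and $a\in r\{x\}$ because $x\mathrel r a$. Hence $a\in(rr^*)V$, so (\ref{eq def A(J)}) yields $U\overlap(rr^*)V=r\{x\}$. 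By the symmetry (\ref{eq. fund. symmetry}), applied with $D=\{x\}$, $r\{x\}\overlap U$ is equivalent to $\{x\}\overlap r^- U$, that is, $x\in r^- U$, as wished.
\end{proof}
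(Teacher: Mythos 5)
Your proof is correct and is essentially the paper's argument: both hinge on the triangular identity $rr^*r=r$ making $r\{x\}$ a fixed point of $rr^*$, so that compatibility of $\AA(rr^*)$ with $rr^*$ (which is exactly what your instantiation of (\ref{eq def A(J)}) at $V=r\{x\}$ uses) forces $U\overlap r\{x\}$, i.e.\ $x\in r^-U$. The only cosmetic difference is that the paper takes an arbitrary saturation $\A$ compatible with $rr^*$ and reduces the claim $\A\sub r^{-*}r^-$ to $r^-\A\sub r^-$ via the adjunction $r^-\dashv r^{-*}$, whereas you work pointwise with $\AA(rr^*)$ itself.
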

\begin{proof}
Let $\A$ be any other saturation compatible with $rr^*$; we must
show that $\A$ $\sub$ $r^{-*}r^-$. By $r^-\dashv r^{-*}$, our claim
reduces to $r^-\A$ $\sub$ $r^-$. So let $a\in r^-\A U$, that is, $\A
U\overlap r\{a\}$. A general consequence of the adjunction $r\dashv
r^*$ is that $rr^*r$ $=$ $r$ (triangular equality, see \cite{mac});
hence $r\{a\}\in Fix(rr^*)$. Thus we can apply compatibility between
$\A$ and $rr^*$ to $\A U\overlap r\{a\}$ and get $U\overlap r\{a\}$,
that is, $a\in r^- U$.
\end{proof}

So $\AA(\J)$ can be constructed predicatively at least when $\J$
is representable. One can show that impredicatively every
reduction is representable. In fact, let $X$ $=$ $Fix(\J)$ and
consider the relation $r$ given by: $\J U\,r\, a$ if $a\in\J U$. So
$r\{\J U\}$ $=$ $\{a\in S$ $|$ $\J U\r a\}$ $=$ $\{a\in S$ $|$
$a\in\J U\}$ $=$ $\J U$. We have: $a\in rr^*U$ iff
$r^{-}\{a\}\overlap r^*U$ iff $(\exists\, \J V\in X)(\J V\in
r^{-}\{a\}$ $\land$ $\J V\in r^*U)$ iff $(\exists\, \J V\in X)(a\in
r\{\J V\}$ $\land$ $r\{\J V\}\sub U)$ iff $(\exists\, \J V\in
X)(a\in\J V$ $\land$ $\J V\sub U)$. Since $\J$ is a reduction, this
is tantamount to $a\in\J U$. 
It is interesting that unfolding the
definition of $r^{-*}r^-$ in this case one obtains precisely the
characterization of $\AA(\J)$ given in (\ref{eq def A(J)}). In fact:
$a\in r^{-*}r^-U$ iff $r^{-}\{a\}\sub r^-U$ iff $(\forall\, \J V\in
X)(\J V\in r^{-}\{a\}$ $\Rightarrow$ $\J V\in r^-U)$ iff $(\forall\,
\J V\in X)(a\in r\{\J V\}$ $\Rightarrow$ $U\overlap r\{\J V\})$ iff
$(\forall\, \J V\in Fix(\J))(a\in\J V$ $\Rightarrow$ $U\overlap\J V)$.

\subsection{Generated basic topologies}

In this section we are going to show that $\JJ(\A)$ can be
constructed predicatively for an important class of saturations,
namely those which can be generated inductively (see \cite{CSSV}).

In \cite{mlsa}  a quite general method is given for
generating basic topologies. One starts from a
family of sets $\{I(a)$ $|$ $a\in S\}$ and subsets $C(a,i)$ $\sub S$
for all $a\in S$ and $i\in I(a)$; this is called an
\emph{axiom-set}. Next one gives rules to generate the least
saturation $\A_{I,C}$ satisfying $a\in\A_{I,C}(C(a,i))$ and, at the
same time, the greatest reduction $\J_{I,C}$ which is compatible
with $\A_{I,C}$, that is, $\JJ(\A_{I,C})$ in our terminology. We are
going to present such rules, though in a slightly different way, and
prove again the main properties of $\A_{I,C}$ and $\J_{I,C}$ given in \cite{mlsa}, 
in particular that $\J_{I,C}$ $=$ $\JJ(\A_{I,C})$.

We say that a subset $P\sub S$
\emph{fulfills the axiom-set} $I,C$ if $C(a,i)\sub P$ $\Rightarrow$ $a\in
P$ for all $a\in S$ and $i\in I(a)$. For every $U\sub S$, let
$\A_{I,C}(U)$ be the subset of $S$ defined by the following
clauses (see \cite{CSSV}): 
\begin{enumerate}
\item $U\sub\A_{I,C}(U)$;
\item $\A_{I,C}(U)$ fulfills the axiom-set $I,C$;
\item if $U\sub P\sub S$ and $P$ fulfills $I,C$, then $\A_{I,C}(U)\sub P$.
\end{enumerate} In other words, $\A_{I,C}(U)$ is the
least subset which contains $U$ and fulfills the axioms. Since subsets
fulfilling the axioms are closed under arbitrary intersections (as it
is easy to check), we can express $\A_{I,C}(U)$ as:
\begin{eqnarray}\label{eq. def. A_R}
\A_{I,C}(U) & = & \bigcap\{P\sub S\ |\ U\sub P\ \land\ P\
\textrm{fulfills}\ I,C\}\textrm{ .}
\end{eqnarray} This shows at once that the operator $\A_{I,C}$ is a saturation,
being of the form (\ref{eq def A and J from a family}) with respect
to the family of all subsets fulfilling the axioms $I,C$.
It also follows  that $Fix(\A_{I,C})$ is exactly
the collection of subsets fulfilling the axioms.

In \cite{mlsa}, the authors propose a dual construction of
$\J_{I,C}(V)$ for $V\sub S$. Contrary to the construction of
$\A_{I,C}U$ which is inductive, the definition of $\J_{I,C}V$ is
coinductive. The rules given in \cite{mlsa} say that $\J_{I,C}(V)$
\emph{ is the greatest subset of $V$ which ``splits''} $I,C$ according to
the following.

\begin{definition}
 We say that a subset $Z\sub S$
\emph{splits the axiom-set} $I,C$ on a set $S$ if $$a\in Z\Longrightarrow
C(a,i)\overlap Z$$ for all $a\in S$ and $i\in I(a)$.
\end{definition}
Since splitting subsets are closed under unions, we get:
\begin{eqnarray}\label{eq. def. J_R}
\J_{I,C}(V) & = & \bigcup\{Z\sub S\ |\ Z\sub V\ \land\ Z\
\textrm{splits}\ I,C\}\textrm{ .}
\end{eqnarray} This shows that $\J_{I,C}$ is a reduction, namely
that which is associated, according to equation (\ref{eq def A and J from a family}), 
with the family of all splitting subsets.

\begin{lemma}
For every axiom-set $I,C$ on $S$ and for every $Z\sub S$ one has:
$$Z\textrm{ splits }I,C\qquad\textrm{ if and only if }\qquad Z=\J_{I,C}(Z)\ .$$
\end{lemma}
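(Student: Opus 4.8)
The plan is to prove the equivalence $Z \textrm{ splits } I,C \Leftrightarrow Z = \J_{I,C}(Z)$ by unwinding the characterization of $\J_{I,C}$ as the greatest subset of $V$ that splits $I,C$, together with the formula (\ref{eq. def. J_R}). The key observation is that $\J_{I,C}$ is a reduction, so $\J_{I,C}(Z) \sub Z$ always holds; hence $Z = \J_{I,C}(Z)$ is equivalent to the single inclusion $Z \sub \J_{I,C}(Z)$.

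First I would prove the forward direction: assume $Z$ splits $I,C$. Then $Z$ is one of the sets in the union defining $\J_{I,C}(Z)$ in (\ref{eq. def. J_R}), since $Z \sub Z$ and $Z$ splits $I,C$; therefore $Z \sub \J_{I,C}(Z)$. Combined with $\J_{I,C}(Z) \sub Z$, this gives $Z = \J_{I,C}(Z)$.

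For the converse, assume $Z = \J_{I,C}(Z)$. I need to show $a \in Z \Rightarrow C(a,i) \overlap Z$ for all $a \in S$ and $i \in I(a)$. Since $Z = \J_{I,C}(Z) = \bigcup\{Z' \sub Z \mid Z' \textrm{ splits } I,C\}$, if $a \in Z$ then $a \in Z'$ for some $Z' \sub Z$ that splits $I,C$; hence $C(a,i) \overlap Z'$, and a fortiori $C(a,i) \overlap Z$ because $Z' \sub Z$. So $Z$ splits $I,C$.

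Neither direction presents a real obstacle; both are immediate consequences of (\ref{eq. def. J_R}) and the fact that the union of splitting subsets is splitting. The only point requiring a little care — and the closest thing to a subtlety — is noting that the clause ``$\J_{I,C}(V)$ is the greatest subset of $V$ that splits $I,C$'' is being used in the explicit form (\ref{eq. def. J_R}); one should be slightly careful that this union is itself a splitting subset (already remarked before (\ref{eq. def. J_R})), so that $\J_{I,C}(Z)$ genuinely is the greatest such set and the forward direction's membership claim is legitimate. I would write the proof in about four lines, treating each implication separately.
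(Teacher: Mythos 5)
Your proof is correct and follows essentially the same route as the paper's: the forward direction observes that a splitting $Z$ is itself a member of the union in (\ref{eq.~def.~J\_R}), and the converse uses that $\J_{I,C}(Z)$ splits $I,C$ (your element-wise verification is just the proof, specialized to this case, of the already-noted fact that unions of splitting subsets split). No gaps.
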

\begin{proof} If $Z$ splits $I,C$, then the union of all
splitting subsets contained in $Z$ gives $Z$ itself. Vice versa,
recall that $\J_{I,C}(Z)$ splits $I,C$ by definition.
\end{proof}

\begin{proposition}\label{lemma basic topology generated}
For every axiom-set $I,C$ on a set $S$, the operators $\A_{I,C}$ and
$\J_{I,C}$ are compatible and the basic topology
$(S,\A_{I,C},\J_{I,C})$ is saturated, that is:
$$\JJ(\A_{I,C})\quad=\quad\J_{I,C}\ .$$
\end{proposition}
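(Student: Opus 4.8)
The plan is to prove the two things asserted: (i) $\A_{I,C}\comp\J_{I,C}$, so that $(S,\A_{I,C},\J_{I,C})$ is indeed a basic topology, and (ii) that this basic topology is saturated, i.e.\ $\J_{I,C}=\JJ(\A_{I,C})$. The crucial observation linking the two constructions is that the subsets splitting the axiom-set $I,C$ are exactly the subsets that split the saturation $\A_{I,C}$ in the sense of definition~\ref{def. splitting subset}. Granting this, part (ii) is essentially immediate: by proposition~\ref{caratterizzazione di J(A)} we have $a\in\JJ(\A_{I,C})V$ iff there is $Z$ with $a\in Z\sub V$ and $Z$ splits $\A_{I,C}$, and by (\ref{eq. def. J_R}) the right-hand side is exactly $a\in\J_{I,C}V$ once ``splits $\A_{I,C}$'' is replaced by the equivalent ``splits $I,C$''. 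For part (i), note $\A_{I,C}\comp const_Z$ holds precisely when $Z$ splits $\A_{I,C}$ (the remark after definition~\ref{def. splitting subset}), hence when $Z$ splits $I,C$; since $\J_{I,C}V$ splits $I,C$ for every $V$ (it is a union of splitting subsets, and splitting subsets are closed under unions), the characterization ``$\O\comp\O'$ iff $\O'V$ splits $\O$ for all $V$'' gives $\A_{I,C}\comp\J_{I,C}$.

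So the real work is the equivalence: \emph{for $Z\sub S$, $Z$ splits the axiom-set $I,C$ iff $Z$ splits the saturation $\A_{I,C}$}. One direction is the easy one. Suppose $Z$ splits $\A_{I,C}$, i.e.\ $\A_{I,C}U\overlap Z\Rightarrow U\overlap Z$ for all $U$. To see $Z$ splits $I,C$, fix $a\in Z$ and $i\in I(a)$; I want $C(a,i)\overlap Z$. Apply the splitting hypothesis with $U=C(a,i)$: since $a\in\A_{I,C}(C(a,i))$ (clause 2, that $\A_{I,C}(C(a,i))$ fulfills the axioms, applied at $a$ and $i$ — more directly, $a\in\A_{I,C}(C(a,i))$ is clause 2) and $a\in Z$, we get $\A_{I,C}(C(a,i))\overlap Z$, hence $C(a,i)\overlap Z$ as wanted.

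The other direction is the main obstacle and is where the inductive character of $\A_{I,C}$ must be used. Suppose $Z$ splits the axiom-set $I,C$; I must show $\A_{I,C}U\overlap Z\Rightarrow U\overlap Z$ for an arbitrary $U$. The idea is: the property ``every element lying in $Z$ is witnessed by an element of $U$ lying in $Z$'' should propagate through the inductive generation of $\A_{I,C}U$. Concretely, consider the predicate $P=\{b\in S\mid b\in Z\Rightarrow U\overlap Z\}$ (equivalently, the complement-style set $-Z\cup\{b\mid U\overlap Z\}$, but it is cleaner to keep it as stated). I claim $U\sub P$ and $P$ fulfills the axiom-set $I,C$; then clause 3 in the definition of $\A_{I,C}$ forces $\A_{I,C}U\sub P$, and applying this at any $a\in\A_{I,C}U\cap Z$ yields $U\overlap Z$, which is exactly compatibility. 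That $U\sub P$ is trivial: if $b\in U$ and $b\in Z$ then $U\overlap Z$. That $P$ fulfills $I,C$: fix $a\in S$, $i\in I(a)$, and assume $C(a,i)\sub P$; I must show $a\in P$, i.e.\ assuming moreover $a\in Z$, derive $U\overlap Z$. From $a\in Z$ and $Z$ splitting $I,C$ we get $C(a,i)\overlap Z$, say $c\in C(a,i)\cap Z$; since $c\in C(a,i)\sub P$ and $c\in Z$, the definition of $P$ gives $U\overlap Z$. This completes the inductive step, hence $\A_{I,C}U\sub P$, hence the equivalence, and with it both halves of the proposition. I would present this using clause 3 (the minimality of $\A_{I,C}U$) rather than the intersection formula (\ref{eq. def. A_R}), since $P$ is manifestly a subset fulfilling the axioms and containing $U$; the whole argument is intuitionistically valid, using no more than the characterization of $\A_{I,C}U$ as the least such subset.
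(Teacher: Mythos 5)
Your proof is correct, but it is organized around a different key lemma than the paper's. The paper proves the two claims directly: for compatibility it fixes $U,V$ and applies clause 3 to the predicate $P=\{a\in S\mid a\in\J_{I,C}V\Rightarrow U\overlap\J_{I,C}V\}$, whose fulfillment of the axiom-set rests on the fact that $\J_{I,C}V$ splits $I,C$; for maximality it shows that $\J'V$ splits $I,C$ whenever $\A_{I,C}\comp\J'$, using $a\in\A_{I,C}(C(a,i))$. You instead isolate the equivalence ``$Z$ splits the axiom-set $I,C$ iff $Z$ splits the operator $\A_{I,C}$'' as a standalone lemma --- your two directions are precisely the paper's two arguments, run for an arbitrary $Z$ rather than only for $Z=\J_{I,C}V$ and $Z=\J'V$ --- and then read off both halves of the proposition from proposition~\ref{caratterizzazione di J(A)} and the remark after definition~\ref{def. splitting subset}. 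The underlying inductive step (a predicate shown to fulfil the axioms, then minimality of $\A_{I,C}U$) is identical, and everything is intuitionistically valid. What your packaging buys is a cleaner and more general statement: the paper records this coincidence of the two splitting notions only for the special impredicative axiom-set discussed at the very end of the section, whereas you establish it for every axiom-set, after which both compatibility and maximality are one-line consequences. What it costs is that your derivation of $\JJ(\A_{I,C})=\J_{I,C}$ routes through the impredicative characterization (\ref{eq def J(A)})--(\ref{eq def J(A)}) of $\JJ$, whereas the paper's direct maximality argument stays closer to the predicative spirit of the section (this is harmless for the statement as given, since $\JJ$ itself appears in it). One cosmetic point: $a\in\A_{I,C}(C(a,i))$ follows from clauses 1 and 2 together, not from clause 2 alone.
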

\begin{proof}
In order to show that $\A_{I,C}$ and $\J_{I,C}$ are compatible, let
$U,V\sub S$ and consider the subset $P$ $=$ $\{a\in S$ $|$
$a\in\J_{I,C}V$ $\Rightarrow$ $U\overlap\J_{I,C}V\}$. Then the
instance of compatibility $\A_{I,C}U\overlap\J_{I,C}V$
$\Rightarrow$ $U\overlap\J_{I,C} V$ is logically equivalent to $\A_{I,C} U\sub
P$. Since clearly $U\sub P$, in order to obtain $\A_{I,C} U\sub
P$ using clause 3 of the definition of $\A_{I,C}U$, we only need
to show that $P$ fulfills $I,C$.
In other words, we
must prove that $U\overlap\J_{I,C} V$ holds under the assumptions
$C(a,i)\sub P$ and $a\in\J_{I,C}V$. From $a\in\J_{I,C}V$ one gets
$C(a,i)\overlap\J_{I,C}V$ because $\J_{I,C}V$ splits $I,C$. Hence also $P\overlap\J_{I,C}V$ since $C(a,i)\sub
P$. So there exists $a'\in P$ such that $a'\in\J_{I,C}V$. By the
definition of $P$, this implies that $U\overlap\J_{I,C}V$.

Finally, let $\J'$ be another reduction which is compatible with
$\A_{I,C}$. Since by definition $\J_{I,C}V$ is, for all $V\sub S$,
the greatest splitting subset contained in $V$, to prove
$\J'\sub\J_{I,C}$ it is sufficient to check that $\J'V$ splits
$I,C$. So let $a\in S$, $i\in I(a)$ and assume $a\in\J'V$. By the
definition of $\A_{I,C}$ we have $a\in\A_{I,C}(C(a,i))$. So
$\A_{I,C}(C(a,i))\overlap\J'V$ and hence, by compatibility of $\J'$ and 
$\A_{I,C}$,  $C(a,i)\overlap\J'V$ as wished.
\end{proof}

In other words, $\JJ(\A)$ admits a predicative construction whenever
$\A$ can be inductively generated. Note that this is always the case 
if one works within an impredicative framework. In
fact, one can take $I(a)$ $=$ $\{U\sub S$ $|$ $a\in\A U\}$ and
$C(a,U)$ $=$ $U$. In this case, a subset $Z$ splits the axioms
precisely when $a\in Z$ $\Rightarrow$ $U\overlap Z$ for every $a$
and $U$ such that $a\in\A U$. This means $\A U\overlap Z$
$\Rightarrow$ $U\overlap Z$ for every $U$, which  says precisely that
$Z$ splits $\A$ according to definition \ref{def. splitting subset}.
Thus, as expected, $\J_{I,C}$ defined by
 (\ref{eq. def. J_R}) coincides with $\JJ(\A)$, as defined by  (\ref{eq def J(A)}).

\end{document}